\theoremstyle{plain}
\newtheorem*{main-theorem}{Main Theorem}
\newtheorem{theorem}[equation]{Theorem}
\newtheorem{prop}[equation]{Proposition}
\newtheorem{claim}[equation]{Claim}
\newtheorem*{claim*}{Claim}
\newtheorem{lemma}[equation]{Lemma}
\theoremstyle{definition}
\newtheorem{definition}[equation]{Definition}
\newtheorem{remark}[equation]{Remark}
\newtheorem{question}[equation]{Question}
\numberwithin{equation}{subsection}
\DeclareMathOperator{\Sym}{Sym}
\DeclareMathOperator{\proj}{Proj}
\DeclareMathOperator{\Stab}{Stab}
\DeclareMathOperator{\lspan}{span}
\DeclareMathOperator{\init}{in}
\DeclareMathOperator{\SL}{SL}
\DeclareMathOperator{\GL}{GL}
\DeclareMathOperator{\Grass}{Grass}
\DeclareMathOperator{\Res}{Res}
\DeclareMathOperator{\Hom}{Hom}
\newcommand{\gitq}{/\hspace{-0.25pc}/}
\def\co{\colon\thinspace} 
\def\QQ{\mathbb{Q}}
\def\bP{\mathbf{P}}
\def\CC{\mathbb{C}}
\begin{document}

\title{GIT semistability of Hilbert points of Milnor algebras} 

\begin{abstract} We study GIT semistability of Hilbert points of Milnor algebras of homogeneous forms. 
Our first result is that a homogeneous form $F$ in $n$ variables is GIT semistable with respect to the natural
$\SL(n)$-action if and only if the gradient point of $F$, which is the first non-trivial Hilbert point of the Milnor algebra of $F$, is semistable.
We also prove that the induced morphism on the GIT quotients is finite, and injective on the locus of stable forms.
Our second result is that the associated form of $F$, also known as the Macaulay inverse system of the Milnor
algebra of $F$, and which is apolar to 
the last non-trivial Hilbert point of the Milnor algebra,
is GIT semistable whenever $F$ is a smooth form. 
These two results answer questions of Alper and Isaev. 
\end{abstract}
\author{Maksym Fedorchuk}
\address{Department of Mathematics, Boston College, 140 Commonwealth Ave, Chestnut Hill, MA 02467, USA}
\email{maksym.fedorchuk@bc.edu}
\maketitle

\setcounter{tocdepth}{1}
\tableofcontents

\section{Introduction} 

The main results of this paper are the following two theorems:
\begin{theorem}\label{T:main} Let $F=F(x_1,\dots,x_n)$ be a homogeneous polynomial of degree $d+1$ in $n$ variables. 
Then $F$ is semistable with respect to the $\SL(n)$-action on $\CC[x_1,\dots,x_n]_{d+1}$ if and only if 
\[
\nabla (F):=\lspan\langle \partial F/\partial x_1,\dots,\partial F/\partial x_n\rangle \in \Grass\bigl(n, \CC[x_1,\dots,x_n]_{d}\bigr)
\]
is semistable with respect to the $\SL(n)$-action on the Grassmannian.
Furthermore, if $F$ is stable, then $\nabla(F)$ is polystable, and stable
if and only if $F$ is not a non-trivial sum of two polynomials in disjoint sets of variables. 
\end{theorem}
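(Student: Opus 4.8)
The plan is to run the entire argument through the Hilbert--Mumford numerical criterion, with the convention that a point $x$ is semistable iff $\mu(x,\lambda)\ge 0$ for every one-parameter subgroup $\lambda$ and stable iff $\mu(x,\lambda)>0$ for every nontrivial $\lambda$. The gradient map is $\SL(n)$-equivariant, $\lambda(t)\cdot\nabla(F)=\nabla(\lambda(t)\cdot F)$, so I may diagonalize $\lambda$ by $x_i\mapsto t^{r_i}x_i$ with $\sum_i r_i=0$. Writing $m=\min\{\langle r,a\rangle : x^a \text{ occurs in } F\}$, so that $\mu(F,\lambda)=-m$, the first step is the weight comparison $\mu(\nabla(F),\lambda)\le n\,\mu(F,\lambda)$. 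I would prove this by inspecting the Pl\"ucker vector $\partial_1 F\wedge\cdots\wedge\partial_n F$: its lowest-weight component is a sum of wedges $x^{a^{(1)}-e_1}\wedge\cdots\wedge x^{a^{(n)}-e_n}$, each of weight $\sum_i\langle r,a^{(i)}\rangle-\sum_i r_i\ge nm$, whence the minimal weight of the Pl\"ucker point is at least $nm$ and $\mu(\nabla(F),\lambda)\le -nm=n\,\mu(F,\lambda)$. Equality holds precisely when a wedge of weight $nm$ survives, i.e.\ when $\partial_1(\init_\lambda F)\wedge\cdots\wedge\partial_n(\init_\lambda F)\ne 0$; so equality fails exactly when the initial form $\init_\lambda(F)$ is \emph{degenerate} in the sense that its partials are linearly dependent. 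Since $F$ stable gives $\mu(F,\lambda)>0$, it follows already that any nontrivial $\lambda$ with $\mu(\nabla(F),\lambda)=0$ forces $\init_\lambda(F)$ to be degenerate.

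Next I would dispatch the easy implication in the dichotomy. If $F=G(x_1,\dots,x_k)+H(x_{k+1},\dots,x_n)$ with $1\le k\le n-1$, take $\lambda$ scaling the first $k$ variables by $t^{b}$ and the rest by $t^{-c}$ with $kb=(n-k)c$. Then $\nabla(F)=\nabla(G)\oplus\nabla(H)$ is the direct sum of a weight-$bd$ subspace of dimension $k$ and a weight-$(-cd)$ subspace of dimension $n-k$, so its Pl\"ucker point has total weight $d\bigl(kb-(n-k)c\bigr)=0$. Thus $\lambda$ fixes $\nabla(F)$, $\mu(\nabla(F),\lambda)=0$, and $\nabla(F)$ is not stable. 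The motivating case is the Fermat form $x_1^{d+1}+\cdots+x_n^{d+1}$, whose gradient span $\langle x_1^d,\dots,x_n^d\rangle$ is fixed by the whole diagonal torus: it is stable yet maximally split, and its gradient point is polystable but highly non-stable.

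The heart of the argument --- and the step I expect to be the main obstacle --- is the converse classification: for stable $F$, every nontrivial $\lambda$ with $\mu(\nabla(F),\lambda)=0$ comes from such a splitting and fixes $\nabla(F)$. By the first paragraph such a $\lambda$ forces $F_0:=\init_\lambda(F)$ to have dependent partials, while the condition $\mu(\nabla(F),\lambda)=0$ says the lowest weight of $\partial_1 F\wedge\cdots\wedge\partial_n F$ equals $0$ rather than the generic $nm<0$. The plan is to read off structure from this balancing: the minimal weight is realized by an optimal ``transversal'' choice of monomials, one from each $\partial_i F$, whose exponent weights $\langle r,a^{(i)}\rangle$ sum to $0$ despite each being $\ge m$. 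I would combine this with the strict inequalities $\mu(F,\lambda)>0$ and $\mu(F,\lambda^{-1})>0$ (both valid since $F$ is stable, the latter controlling the top weight) to pin the extreme weights, and use Euler's relation $\sum_i x_i\,\partial_i F=(d+1)F$ to convert the degeneracy of $F_0$ into an actual decomposition of $F$. The delicate point is ruling out three or more distinct weight values and controlling the higher-order directions of $\nabla(F)$ beyond $\nabla(F_0)$; I expect the weight bookkeeping to collapse $\lambda$ to exactly two values, partitioning the variables into sets $S$ and $S^{c}$ with $F=F|_S+F|_{S^c}$, and to show that the associated $\lambda$ then fixes $\nabla(F)$ as in the split example.

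Granting this classification, the remaining conclusions are formal. For polystability I would invoke the Kempf form of the criterion: a semistable point has closed orbit in the semistable locus precisely when every $\lambda$ with $\mu=0$ has $\lim_{t\to 0}\lambda(t)\cdot x$ in its orbit. Since the only such $\lambda$ for $\nabla(F)$ are the splitting subgroups, each of which fixes $\nabla(F)$, every such limit is $\nabla(F)$ itself, so the orbit is closed and $\nabla(F)$ is polystable. The stability dichotomy then follows at once: $\nabla(F)$ is stable iff no nontrivial $\lambda$ gives $\mu(\nabla(F),\lambda)=0$, which by the classification happens iff $F$ admits no nontrivial disjoint-variable decomposition. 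As a consistency check, and a possible alternate route to polystability, I would note that stability of $F$ makes its orbit closed with finite (hence reductive) stabilizer, and that the finiteness of the induced morphism on GIT quotients should let one identify the unique closed orbit over $[\nabla(F)]$ with $\nabla(F)$ itself.
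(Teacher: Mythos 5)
There is a genuine gap, and it sits exactly where the paper's real work is. The theorem's central assertion is that semistability of $F$ implies semistability of $\nabla(F)$, equivalently that instability of $\nabla(F)$ forces instability of $F$ --- and your proposal never proves this. Your weight comparison $\mu(\nabla(F),\lambda)\le n\,\mu(F,\lambda)$ is correct, but it only yields the easy direction (unstable $F$ gives unstable $\nabla(F)$, the half-page observation that opens the paper's proof). For the converse it points the wrong way: $\mu(\nabla(F),\lambda)<0$ is perfectly compatible with $\mu(F,\lambda)>0$, so a $1$-PS destabilizing the gradient point need not destabilize $F$ at all, and no single-$\lambda$ analysis can close the loop. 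The paper's proof manufactures a \emph{different} destabilizing $1$-PS for $F$ inside the same torus: after Lemma \ref{key-1} --- an upper-triangular coordinate change making the initial monomials $\init_{\lambda}(\partial F/\partial x_i)$ pairwise distinct, which is needed precisely because wedge cancellation means the initial Pl\"ucker coordinate is \emph{not} in general the wedge of the partials' initial monomials (your ``optimal transversal choice of monomials'' elides exactly this) --- one extracts rational numbers $\mu_i'$ with $\sum_i\mu_i'=0$ and $w_{\lambda}\bigl(\init_{\lambda}(\partial F/\partial x_i)\bigr)>\mu_i'$, and the convexity Lemma \ref{key-2} then produces a new linear functional $M(z)=(d+1)L(z)-\sum_i\mu_i z_i$ vanishing at the barycenter and positive on the state of $F$, i.e.\ a fresh destabilizing $1$-PS for $F$. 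Nothing in your proposal plays the role of either lemma.

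The second half is likewise a plan rather than a proof, and the plan's guess is off. You defer the classification step (``I expect the weight bookkeeping to collapse $\lambda$ to exactly two values''), but that is not what happens: a strictly semistabilizing $\lambda$ may have many distinct weights, and what the paper extracts --- from the equality case of Lemma \ref{key-2} together with stability of $F$ forcing $M\equiv 0$, hence $w_{\lambda}\bigl(\init_{\lambda}(\partial F/\partial x_i)\bigr)=d\lambda_i$ for all $i$ --- is a splitting at the top weight level only, $F=G(x_1,\dots,x_r)+H(x_{r+1},\dots,x_n)$ with $r$ the smallest index such that $\lambda_{r+1}=\cdots=\lambda_n$; Euler's relation applied to a degenerate initial form is not the mechanism, and degeneracy of $\init_{\lambda}(F)$ alone is far weaker than a disjoint-variable splitting of $F$. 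Your polystability argument then leans on the unproven (and stronger than what the paper establishes) claim that \emph{every} $\lambda$ with $\mu(\nabla(F),\lambda)=0$ fixes $\nabla(F)$; the paper avoids needing this by passing to a maximal decomposition $F=G_1+\cdots+G_r$ with $V=W_1\oplus\cdots\oplus W_r$ and invoking Luna's criterion for the centralizer $C_{\SL(V)}(\lambda)$, reducing closedness of the $\SL(V)$-orbit to closedness of the $\SL(W_i)$-orbits of the $\nabla G_i$, where the $G_i$ are stable and indecomposable. In short: your first paragraph and the Fermat/splitting observation reproduce the paper's easy steps correctly, but the two load-bearing arguments --- the transfer of instability via a newly constructed $1$-PS, and the equality analysis classifying strict semistability --- are absent, and the semistability equivalence itself is exactly the part left unproved.
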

This theorem answers in affirmative the semistability part of \cite[Question 3.3]{alper-isaev-binary}.
In Proposition \ref{P:almost-embedding}, we apply this result to deduce that the morphism 
on GIT quotients induced by $\nabla$ is finite and generically injective, giving a partial answer
to the rest of \cite[Question 3.3]{alper-isaev-binary}.

The second result deals with \emph{the associated forms} of homogeneous regular sequences,
as defined by Alper and Isaev \cite{alper-isaev-assoc,alper-isaev-binary}.
We refer the reader to Subsection \ref{S:complete} for the definitions. 
\begin{theorem}\label{T:main-2}
Suppose that $g_1,\dots,g_n$ is a regular sequence of homogeneous degree $d$ polynomials
in $\CC[x_1,\dots,x_n]$. Then 
the associated form 
\[
\mathbf A (g_1,\dots,g_n) \in \bP\bigl(\CC[\partial/\partial x_1, \dots, \partial/\partial x_n]_{n(d-1)}\bigr)
\]
is semistable with respect to the $\SL(n)$-action.
In particular, for every smooth form $F$, 
the associated form of the regular sequence $\partial F/\partial x_1,\dots,\partial F/\partial x_n$ is semistable.
\end{theorem}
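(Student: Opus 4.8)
The plan is to verify semistability through the Hilbert--Mumford numerical criterion, after translating it into a purely combinatorial statement about the \emph{support} of the associated form. Write $R=\CC[x_1,\dots,x_n]$, let $I=(g_1,\dots,g_n)$, and let $M=R/I$ be the resulting Artinian Gorenstein algebra, whose socle sits in degree $N:=n(d-1)$ with $\dim_{\CC} M_N=1$. The associated form $\mathbf A=\mathbf A(g_1,\dots,g_n)\in\CC[\partial/\partial x_1,\dots,\partial/\partial x_n]_{N}$ is the generator of the top graded piece of the Macaulay inverse system of $I$; unwinding this, the coefficient of a monomial $(\partial/\partial x_1)^{\beta_1}\cdots(\partial/\partial x_n)^{\beta_n}$ in $\mathbf A$ is, up to a fixed nonzero scalar, the image of $x^{\beta}$ under the socle isomorphism $M_N\xrightarrow{\sim}\CC$. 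Hence the support of $\mathbf A$ is exactly
\[
B=\{\beta\in\ZZ_{\ge 0}^n : |\beta|=N,\ x^{\beta}\notin I\}.
\]

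First I would record the numerical criterion in these terms. A diagonal one-parameter subgroup $\lambda$ with $\lambda(t)\cdot x_i=t^{a_i}x_i$ and $\sum_i a_i=0$ acts on $\partial/\partial x_i$ with weight $-a_i$, so a monomial $\partial^{\beta}$ carries $\lambda$-weight $-\langle a,\beta\rangle$. By Hilbert--Mumford, $\mathbf A$ is semistable if and only if for every such $\lambda$ the weights of the monomials occurring in $\mathbf A$ straddle $0$, i.e. $\min_{\beta\in B}\langle a,\beta\rangle\le 0\le\max_{\beta\in B}\langle a,\beta\rangle$. Since the barycentric exponent $p^{*}=(d-1,\dots,d-1)$ lies on the hyperplane $|\beta|=N$ and satisfies $\langle a,p^{*}\rangle=(d-1)\sum_i a_i=0$ for every admissible $a$, this condition is equivalent to the single geometric assertion that \emph{the point $p^{*}$ lies in the convex hull of $B$}. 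Everything thus reduces to this one combinatorial claim.

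I would attack the claim by contradiction via a separating hyperplane. If $p^{*}$ were outside the convex hull of $B$, there would be an admissible weight $a$ with $\langle a,\beta\rangle<0$ for every $\beta\in B$; equivalently, every degree-$N$ monomial of nonnegative $\lambda$-weight would die in $M_N$. The engine for deriving a contradiction is the classical fact that the socle of the complete intersection $M$ is generated by the image of the Jacobian $J=\det(\partial g_i/\partial x_j)$, an element of degree $N$. I would track $J$ through the weight filtration defined by $a$: passing to the associated graded algebra $\mathrm{gr}_\lambda M=R/\mathrm{in}_\lambda(I)$, which is again Artinian with the same (symmetric) Hilbert function and one-dimensional top piece, and exploiting the perfect Gorenstein pairing $M_k\times M_{N-k}\to M_N$, one should show that the $\lambda$-weights of the nonzero socle monomials cannot all lie strictly on one side of $0$: the symmetry $a\mapsto -a$ of the pairing is meant to force $0$ into their weight interval. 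Concretely this amounts to showing that the Newton polytope of (the initial form of) $J$ is barycentric about $p^{*}$, which is the weighted-homogeneity of the Grothendieck residue, or Scheja--Storch, socle form.

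The step I expect to be the main obstacle is precisely this last one, and it is genuine: the initial forms $\mathrm{in}_\lambda(g_1),\dots,\mathrm{in}_\lambda(g_n)$ of a regular sequence need \emph{not} form a regular sequence, so one cannot simply degenerate $g_\bullet$ to a monomial complete intersection (for which $\mathbf A=(\partial/\partial x_1)^{d-1}\cdots(\partial/\partial x_n)^{d-1}$ is the single barycentric monomial and semistability is immediate). Consequently $\mathrm{in}_\lambda(I)$ may strictly contain $(\mathrm{in}_\lambda g_1,\dots,\mathrm{in}_\lambda g_n)$ and $\mathrm{gr}_\lambda M$ may fail to be Gorenstein, so the clean weighted homogeneity of residues is not available off the shelf. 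The crux will therefore be to control the socle of the initial algebra without assuming that initial forms are a regular sequence---either through the residue transformation law applied to the full flat limit of $I$, or by a direct argument that $J$ retains a nonzero socle monomial of nonpositive $\lambda$-weight---thereby salvaging the barycentric symmetry. Finally, the ``in particular'' statement for smooth $F$ follows at once by taking $g_i=\partial F/\partial x_i$, which form a regular sequence exactly when $F$ is smooth.
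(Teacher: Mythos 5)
Your reduction is sound and in fact matches the paper's: identifying the support of $\mathbf A(g_1,\dots,g_n)$ with the exponents of degree-$n(d-1)$ monomials outside $I$, and translating Hilbert--Mumford semistability into the assertion that the barycentric exponent $(d-1,\dots,d-1)$ lies in the convex hull of that support, is precisely equivalent to the paper's Proposition \ref{P:assoc}, which says that the unique degree-$n(d-1)$ monomial $m_0$ that is not an initial monomial of $I_{n(d-1)}$ satisfies $m_0 \geq_{\lambda} x_1^{d-1}\cdots x_n^{d-1}$ for every $1$-PS $\lambda$. But at that point your argument stops: this convex-hull claim \emph{is} the entire content of the theorem, and the engine you propose for it --- weighted homogeneity of the Scheja--Storch/Grothendieck residue socle generator, or Gorenstein symmetry of $\mathrm{gr}_\lambda(S/I) = S/\init_\lambda(I)$ --- is not available in the generality needed, as you yourself concede. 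The initial ideal $\init_\lambda(I)$ may strictly contain $(\init_\lambda g_1,\dots,\init_\lambda g_n)$; the flat limit $S/\init_\lambda(I)$ has the same (symmetric) Hilbert function but need not be Gorenstein, so its pairing can degenerate; and no ``barycentric Newton polytope of $J$'' statement holds off the shelf. Flagging this obstacle is not the same as overcoming it: as written, the proposal establishes only the routine equivalence, not the theorem.

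The paper closes exactly this gap by an elementary mechanism absent from your sketch. Assuming $m_0 <_{\lambda} x_1^{d-1}\cdots x_n^{d-1}$, every monomial $m_1=x_1^{d_1}\cdots x_n^{d_n}\notin I_{n(d-1)}$ must satisfy $d_1+\cdots+d_i > i(d-1)$ for some $i\in\{1,\dots,n-1\}$; restricting $g_1,\dots,g_n$ to $x_{i+1}=\cdots=x_n=0$ and applying a Bertini-type lemma (Lemma \ref{L:bertini}) extracts a length-$i$ regular sequence inside $(h_1,\dots,h_n)$ in the variables $x_1,\dots,x_i$, whose quotient has socle degree $i(d-1)$; hence $x_1^{d_1}\cdots x_i^{d_i}$ lies in $I$ modulo $(x_{i+1},\dots,x_n)$, which forces the existence of a new monomial $m_2\notin I_{n(d-1)}$ whose exponents in the tail variables weakly increase with at least one strict increase. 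Iterating produces an impossible infinite ascent. If you wanted to salvage your residue-theoretic route, you would have to control the socle of the flat limit $S/\init_\lambda(I)$ directly, which is essentially as hard as the theorem itself; the restriction-to-coordinate-subspaces descent is the missing idea your proposal needs.
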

This answers in affirmative the semistability part of \cite[Question 3.1]{alper-isaev-binary}.

\subsubsection*{Notation} We work over the complex numbers, characteristic $0$ hypothesis being essential.
Given a vector space $V$, we let $\bP V =  \proj \bigl(\Sym V^{\vee}\bigr)$ 
denote the space of \emph{lines} in $V$. We denote by $\Grass\bigl(k, V\bigr)$
the Grassmannian of $k$-dimensional \emph{subspaces} of $V$. 
We generally keep the notation of \cite{alper-isaev-binary}, with the following
exceptions: (i) our $V$ is the dual of their $V$, and 
(ii) our default degree of a homogeneous form is $d+1$, and not $d$. 

\subsection{Stability of homogeneous forms}
\label{S:GIT}
Let $V$ be a $\CC$-vector space of dimension $n$. 
The Geometric Invariant Theory (GIT) gives a projective moduli space for the isomorphism classes of
degree $d+1$ hypersurfaces in $\bP V^{\vee}$ \cite[Chapter 4.2]{GIT}. 
Recall that a homogeneous form $F$ of degree $d+1$ on $V^{\vee}$ is \emph{semistable} 
with respect to the standard $\SL(V)$-action on $\Sym^{d+1} V$
if and only if the closure of the $\SL(V)$-orbit of $F$ in $\Sym^{d+1} V$ does not contain $\mathbf{0}$. 
We call a hypersurface semistable if it is defined by a semistable form. 
The GIT quotient $\mathbf{P} \bigl(\Sym^{d+1} V\bigr)^{ss} \gitq \SL(V)$ parameterizes orbits of semistable hypersurfaces 
that are closed in the semistable locus. 
Concretely, this moduli space is given by the projective spectrum of the graded algebra of $\SL(V)$-invariant
forms on $\Sym^{d+1} V$:
\begin{equation}
\mathbf{P} \bigl(\Sym^{d+1} V\bigr)^{ss} \gitq \SL(V) = \proj \bigoplus_{m\geq 0} \left(\Sym^{m} \bigl(\Sym^{d+1} V\bigr)^\vee \right)^{\SL(V)}.
\end{equation}
The Hilbert-Mumford Numerical Criterion \cite[Theorem 2.1]{GIT} gives, in principle, 
a way to determine the set of semistable hypersurfaces,
but a complete geometric description of the resulting quotient is available only for certain small values of $n$ or $d$.

Since the $\SL(V)$-action on $\bP\bigl(\Sym^{d+1} V\bigr)$ admits a unique (up to scaling) linearization,
there exists only one notion of $\SL(V)$-stability for elements of $\bP\bigl(\Sym^{d+1} V\bigr)$. 
However, by considering the Milnor algebra associated to a homogeneous form
and the Hilbert points of this algebra, one obtains \emph{a priori} different variants of stability. 
The goal of this paper is to explore two such variants appearing in the work of Alper and Isaev 
\cite{alper-isaev-assoc,alper-isaev-binary} on an invariant-theoretic approach
to the reconstruction problem arising from the well-known Mather-Yau theorem.

\subsection{Complete intersection algebras and associated forms}
\label{S:complete}
This section introduces some background for the main results and restates 
the GIT problems introduced in \cite{alper-isaev-binary} 
in the language of Hilbert points. 

Let $V$ be a vector space of dimension $n$. Denote $S=\Sym V$. 
For a point $[W] \in \Grass\bigl(n, \Sym^{d} V\bigr)$, that is, 
an $n$-dimensional linear space $W$ of degree $d$ homogeneous forms on $V^{\vee}$, 
we form the ideal $I_W:=(W) \subset S$
and the quotient algebra $S_W:=S/I_W.$

\begin{definition}
The  \textbf{$m^{th}$ Hilbert point of $S_W$} is the short exact sequence
\[
0 \to (I_W)_{m} \to \Sym^m V \to (S_W)_{m} \to 0,
\]
that we regard as a point of the Grassmannian $\Grass\bigl(\dim (I_W)_{m}, \Sym^m V\bigr)$.
\end{definition}

GIT stability of $m^{th}$ Hilbert points of the homogeneous coordinate rings of projective schemes 
(especially, in dimensions $0$, $1$, and $2$, and for $m\gg 0$) 
is a classical subject in moduli theory. Although $\proj S_W$ is empty  
for a generic choice of $W$, the problem of GIT stability of $m^{th}$ Hilbert points of $S_W$ is still interesting, but only for a 
finite range of $m$. 

Suppose $W=\lspan\langle g_1,\dots, g_n\rangle$, where $g_i$'s are linearly independent degree
$d$ forms on $V^{\vee}$.
Then $g_1,\dots, g_n$
form a regular sequence in $S$ if and only if $\dim S_W=0$ (Krull's Hauptidealsatz) 
if and only if the locus given by
\[
g_1= \cdots= g_n=0
\]
is empty in $\bP V^{\vee}$ (Hilbert's Nullstellensatz)
if and only if $S_W$ is Artinian. Moreover, if any of the above equivalent conditions hold, 
then $S_W$ is a graded local Artinian Gorenstein $\CC$-algebra with socle in degree $n(d-1)$. 
The formula for the socle degree can be obtained by using adjunction to compute the dualizing module of $S_W$:
\[
\omega_{S_W}\simeq \omega_{S}\bigl(nd\bigr)\otimes S_W \simeq S\bigl(-n+nd\bigr)\otimes S_W=S_W\bigl(n(d-1)\bigr),
\]
or by noting (cf. \cite[Theorem 11.1]{AM}) that the Hilbert function of $S_W$ is 
\[
\dfrac{(1-t^d)^n}{(1-t)^n}=(1+t+\cdots+t^{d-1})^n.
\]

Recall that the locus given by
$g_1= \cdots= g_n=0$ is empty in $\bP V^{\vee}$ if and only if the resultant of $g_1,\dots, g_n$ 
is zero \cite[Chapter 13]{GKZ}. It
follows that there exists an $\SL(V)$-invariant divisor
\[
\Res \subset \Grass\bigl(n, \Sym^{d} V\bigr)
\] parameterizing subspaces that do not generate a complete
intersection ideal. We denote by $\Grass\bigl(n, \Sym^{d} V\bigr)_{\Res}$ the affine complement of $\Res$.

Let $\iota(m)=\dim (I_W)_{m}$, where $[W]\in \Grass\bigl(n, \Sym^{d} V\bigr)_{\Res}$. Note
that $\iota(m)$ is simply the coefficient of $t^m$ in the Hilbert function $(1+t+\cdots+t^{d-1})^n$ of $S_W$.
It follows from the above discussion that for each $d \leq m \leq n(d-1)$, there is a rational map
\[
H_m\co \Grass\bigl(n, \Sym^{d} V\bigr) \dashrightarrow \Grass\bigl(\iota(m), \Sym^m V\bigr)
\]
assigning to $[W]$ the $m^{th}$ Hilbert point of $S_W$. By construction, this map is a morphism on 
$\Grass\bigl(n, \Sym^{d} V\bigr)_{\Res}$. Moreover, this morphism 
is equivariant with respect to the natural actions of $\SL(V)$ on both sides. 

Following \cite{alper-isaev-binary},
we also denote $H_{n(d-1)}$ by $\mathbf A$. 
For $[W]\in \Grass\bigl(n, \Sym^{d} V\bigr)_{\Res}$, we have
\[
\mathbf A(W)=\left[\Sym^{n(d-1)} V \to (S_W)_{n(d-1)}\to 0\right]\in \bP\left(\bigl(\Sym^{n(d-1)} V\bigr)^{\vee}\right),
\]
where we have used $\dim (S_W)_{n(d-1)}=1$ to identify $\mathbf A(W)$ with a point in the
space of lines in $\bigl(\Sym^{n(d-1)} V\bigr)^{\vee}$.
Using the natural isomorphism 
\[
\bP\left(\bigl(\Sym^{n(d-1)} V\bigr)^{\vee}\right)\simeq \bP(\Sym^{n(d-1)} V^{\vee}\bigr)
\]
given by the polar pairing, 
we can identify $\mathbf A(W)$ with an element of $\bP(\Sym^{n(d-1)} V^{\vee}\bigr)$.
This gives an element in $\Sym^{n(d-1)} V^{\vee}$, defined up to a non-zero scalar, 
which is called \textbf{the associated form of $g_1,\dots,g_n$} by Alper and Isaev \cite[\S 2.2]{alper-isaev-binary}. 
Note that by construction, the associated form of $g_1,\dots,g_n$ is 
the element of $\Sym^{n(d-1)} V^{\vee}$
that is apolar to the codimension one subspace 
$(g_1,\dots,g_n)_{n(d-1)} \subset \Sym^{n(d-1)} V$. 
Classically, the associated form $\mathbf A(W)$
is known as the homogeneous Macaulay inverse system of 
$S_W$ with respect to the presentation $S_W=S/I_W$.

Since the Grassmannian $\Grass\bigl(\iota(m), \Sym^m V\bigr)$ admits a natural $\SL(V)$ action
and the morphism $H_m$ is equivariant on the locus where it is defined, we can ask the following:
\begin{question}\label{Q1}
For which $W$ and which $m$, is the $m^{th}$ Hilbert point of $S_W$ semistable with respect
to the $\SL(V)$-action? 
\end{question}
Our first result (Theorem \ref{T:main}) is a complete answer to Question \ref{Q1} for $m=d$ and for $W$ lying in 
the image of the gradient morphism. In the next two subsections, we describe this morphism and its image in more detail. 
The proof of Theorem \ref{T:main} will be given in Section \ref{S:proof}.

Since $\Res$ is an $\SL(V)$-invariant divisor, every point of $\Grass\bigl(n, \Sym^{d} V\bigr)_{\Res}$ is automatically 
$\SL(V)$-semistable. Hence we can ask  
\begin{question}\label{Q2} Suppose $d \leq m \leq n(d-1)$. 
Is $H_m$ a semistability preserving
morphism on the locus where it is defined? In particular, 
is the $m^{th}$ Hilbert point of $S_W$ semistable for every $[W] \in \Grass\bigl(n, \Sym^{d} V\bigr)_{\Res}$?
\end{question}
When $m=n(d-1)$, the above question is part of \cite[Question 3.1]{alper-isaev-binary}, which further asks
whether the induced morphism on the GIT quotients is an immersion.
Our second result (Theorem \ref{T:main-2}) is a complete answer to Question \ref{Q2}
for $m=n(d-1)$. We prove Theorem \ref{T:main-2} in Section \ref{S:proof-2}.

\subsection{Milnor algebra and its Hilbert points}\label{S:milnor}
As before, $S=\Sym V$.
The module of $\CC$-derivations of $S$ is naturally isomorphic to $V^\vee\otimes S$.
\begin{definition} Given $F\in \Sym^{d+1} V$, we define \textbf{the gradient point of $F$}
to be the subspace $\nabla F \subset \Sym^{d} V$ spanned by the first partial derivatives of $F$. 
That is, $\nabla F$ is the image of the natural
linear map $V^{\vee} \to \Sym^{d}V$ given by restricting to $V^{\vee} \times [F]$ the bilinear
differentiation map \[
V^{\vee} \times \Sym^{d+1}V \to \Sym^{d} V.
\] 
\end{definition}

Note that $\dim \nabla F=n$ if and only if $F\notin \Sym^{d+1} W$
for any proper subspace $W\subset V$. If $\dim \nabla F=n$, we will denote by $\nabla(F)$ the corresponding 
point of $\Grass\bigl(n, \Sym^{d} V\bigr)$.

The \emph{Jacobian ideal} of $F\in \Sym^{d+1} V$ is the ideal generated by the elements of $\nabla F$:
\begin{equation}\label{jacobian}
J_F:=I_{\nabla F}=(\partial F \mid \partial\in V^\vee),
\end{equation}
and \emph{the Milnor algebra of $F$} is
\[
M_F:=S/J_F.
\]
Concretely, if we choose a basis $\{x_1,\dots,x_n\}$ of $V$ and take the dual basis $\partial/\partial x_1, \dots, \partial/\partial x_n$ 
of $V^\vee$, then 
\[
\nabla F=\lspan\left\langle \partial F/\partial x_1, \dots, \partial F/\partial x_n\right\rangle,
\]
and the Milnor algebra of $F$ can be written explicitly as 
\begin{equation*}
M_F=\CC[x_1,\dots,x_n]/(\partial F/\partial x_1, \dots, \partial F/\partial x_n).
\end{equation*}
As we have already discussed, $\partial F/\partial x_1, \dots, \partial F/\partial x_n$
form a regular sequence in $S$ if and only if the locus given by
\[
\partial F/\partial x_1= \cdots= \partial F/\partial x_n=0
\]
is empty in $\bP V^{\vee}$ if and only if $F$ is smooth (the Jacobian Criterion).
In particular, if $F$ is smooth, then $M_F$ is a graded local Artinian Gorenstein $\CC$-algebra 
with socle in degree $\nu:=n(d-1)$.
As discussed in \S\ref{S:complete},
the interesting Hilbert points of $M_F$ occur only for $d \leq m \leq \nu$.

As also discussed in \S\ref{S:complete}, if $F$ is smooth, the $\nu^{th}$ Hilbert point of $M_F$ is a $1$-dimensional quotient of $\Sym^{\nu} V$, 
and so is an element of 
\[
\bP \left(\bigl(\Sym^{\nu} V\bigr)^\vee\right) \simeq \bP \bigl(\Sym^{\nu} V^{\vee}\bigr) \simeq \CC[\partial/\partial x_1, \dots, \partial/\partial x_n].
\] 
The corresponding element
of $\bP \bigl(\Sym^{\nu} V^{\vee}\bigr)$ is a line generated by
\textbf{the associated form of $F$}, as defined by Alper and Isaev in \cite[\S 2.2]{alper-isaev-assoc}.
(The $\nu^{th}$ Hilbert point of $M_F$ determines the associated form of $F$ up to a scalar, but one can recover
the form exactly using the condition that it takes value $1$ on the Hessian polynomial of $F$.)

The first main result of this paper is a characterization of the $\SL(V)$-semistability of the first non-trivial
Hilbert point of $M_F$ in terms of the $\SL(V)$-semistability of $F$. It is described in the next subsection. 

\subsection{The gradient morphism}
The association to a homogeneous form of its gradient point defines an $\SL(V)$-equivariant rational
map 
\[
\nabla \co \bP\bigl(\Sym^{d+1} V\bigr) \dashrightarrow \Grass\bigl(n, \Sym^{d} V\bigr),
\]
called \textbf{the gradient map}, cf. \cite[Section 2]{alper-isaev-binary}. 

Let $\Delta\subset \bP\bigl(\Sym^{d+1} V\bigr)$ be the $\SL(V)$-invariant divisor parameterizing
singular hypersurfaces and $\bP\bigl(\Sym^{d+1} V\bigr)_{\Delta}$ be the affine complement of $\Delta$.
Then $\nabla$ restricts to an $\SL(V)$-morphism between the affine varieties 
$\bP\bigl(\Sym^{d+1} V\bigr)_{\Delta}$ and $\Grass\bigl(n, \Sym^{d} V\bigr)_{\Res}$,
where the latter was defined in \S\ref{S:complete}.

Summarizing the above discussion, we have the following commutative diagram 
\begin{equation*}
\xymatrix{ 
\Sym^{d+1} V \ar[r] \ar@{-->}[d]    & \Hom(V^{\vee}, \Sym^{d} V)  \ar@{-->}[d] \\ 
\bP\bigl(\Sym^{d+1} V\bigr)  \ar@{-->}[r]^{\nabla \qquad} & \Grass(n, \Sym^{d} V) \\
\bP\left(\Sym^{d+1} V\right)_\Delta \ar[r] \ar@{^{(}->}[u] & \Grass(n, \Sym^{d} V)_{\Res} \ar@{^{(}->}[u]  
}
\end{equation*}

By definition, $\bP\bigl(\Sym^{d+1} V\bigr)_{\Delta}$ and  $\Grass\bigl(n, \Sym^{d} V\bigr)_{\Res}$ lie in the semistable
(with respect to the $\SL(V)$ action)
locus of $\bP\bigl(\Sym^{d+1} V\bigr)$ and $\Grass\bigl(n, \Sym^{d} V\bigr)$, respectively. In fact, 
all points in $\bP\bigl(\Sym^{d+1} V\bigr)_{\Delta}$ are automatically stable with respect to the $\SL(V)$ action
as long as $d\geq 2$ by Mumford's observation \cite[Proposition 4.2]{GIT}. Hence $\nabla$ preserves semistability on the locus of smooth forms.
We will prove that $\nabla$ always preserves semistability. More precisely, we have:
\begin{theorem}[Theorem \ref{T:main}]\label{T:main-restated}
Let $F\in \Sym^{d+1} V$.
Then $F$ is $\SL(V)$-semistable if and only if $\nabla(F)$ 
is a well-defined and $\SL(V)$-semistable point of $\Grass\bigl(n, \Sym^{d} V\bigr)$.
Suppose $F$ is stable. Then $\nabla(F)$ is polystable; moreover, $\nabla(F)$ is stable
if and only if $F\notin \Sym^{d+1} U+\Sym^{d+1} W$ for a non-trivial decomposition $V=U\oplus W$.
\end{theorem}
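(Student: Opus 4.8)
The plan is to prove both directions of the semistability equivalence via the Hilbert--Mumford numerical criterion, translating the numerical invariant of $F$ under a one-parameter subgroup into the numerical invariant of $\nabla(F)$ on the Grassmannian. Fix a one-parameter subgroup $\lambda$ of $\SL(V)$ acting diagonally with weights $r_1 \geq \cdots \geq r_n$ (summing to $0$) on a basis $x_1,\dots,x_n$ of $V$. A monomial $x_1^{a_1}\cdots x_n^{a_n}$ of degree $d+1$ has $\lambda$-weight $\sum a_i r_i$, and the Hilbert--Mumford weight $\mu(F,\lambda)$ is the maximum over monomials appearing in $F$ of this quantity (with the standard sign convention making $F$ semistable iff $\mu(F,\lambda)\leq 0$ for all $\lambda$). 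The key computational observation is that differentiation shifts weights in a controlled way: $\partial F/\partial x_i$ carries the monomials of $F$ divisible by $x_i$, and the weight of such a derivative monomial differs from the parent monomial's weight by exactly $-r_i$. I would therefore first establish a clean dictionary relating the weight filtration of $F$ under $\lambda$ to the weight filtration of the $n$-dimensional space $\nabla F$ under the induced action on $\Grass(n,\Sym^d V)$, whose numerical invariant is computed as a sum over a suitably chosen basis of weight-extremal derivatives.

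The forward implication---if $\nabla(F)$ is semistable then $F$ is semistable---is the conceptual heart and should follow from an inequality bounding $\mu(F,\lambda)$ in terms of $\mu(\nabla(F),\lambda)$, using that the top-weight monomial of $F$ must be visible in at least one partial derivative, so a destabilizing $\lambda$ for $F$ would produce a destabilizing $\lambda$ for $\nabla F$. Conversely, for the reverse implication I would show that if $F$ is semistable (so $\mu(F,\lambda)\leq 0$ for all $\lambda$), then the optimal choice of a weight-maximal basis for $\nabla F$ keeps its total weight nonpositive; the worst case is controlled because the $n$ partials span $\nabla F$ and their leading weights are governed by the weights $r_i$ summing to zero. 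I expect the main obstacle to be the careful handling of the case where $\nabla F$ has dimension less than $n$ or where $\nabla(F)$ is not even well-defined: here one must confront $F \in \Sym^{d+1} W$ for a proper subspace $W$, and show directly that such an $F$ is unstable (by choosing a one-parameter subgroup heavily weighting the complement of $W$), matching the statement that semistability of $F$ already forces $\nabla(F)$ to be well-defined.

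For the refined assertions about stable $F$, I would first prove polystability of $\nabla(F)$: since $F$ stable implies its $\SL(V)$-orbit is closed, and the gradient morphism is $\SL(V)$-equivariant and (by the finiteness in Proposition \ref{P:almost-embedding}) has finite fibers on the stable locus, the image orbit $\SL(V)\cdot\nabla(F)$ should again be closed, which is exactly polystability. Finally, for the stability criterion, I would analyze when $\nabla(F)$ fails to be stable, i.e.\ has a positive-dimensional stabilizer or lies on an orbit meeting a destabilizing direction with $\mu=0$. A nontrivial one-parameter subgroup $\lambda$ achieving $\mu(\nabla(F),\lambda)=0$ forces the weight distribution of $\nabla F$ to split symmetrically, and I would show this is equivalent to the monomials of $F$ splitting along a direct sum decomposition $V=U\oplus W$ with $F \in \Sym^{d+1}U + \Sym^{d+1}W$: in the forward direction the torus fixing such a decomposition gives the required $\lambda$, and in the reverse direction a weight-balancing argument recovers the splitting from the extremal weight structure of $\nabla F$. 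The delicate point throughout is ensuring that the numerical equality case for the Grassmannian translates precisely into an algebraic decomposition of $F$, rather than merely a decomposition of the span $\nabla F$; I expect this correspondence---driven by the characteristic-zero Euler relation $\sum x_i \,\partial F/\partial x_i = (d+1)F$, which lets one reconstruct $F$ from its partials---to be where the argument requires the most care.
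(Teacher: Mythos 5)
There are two genuine gaps here, and they sit at exactly the two hardest points of the theorem. First, you have the difficulty inverted: the direction you call ``the conceptual heart'' (a destabilizing $\lambda$ for $F$ destabilizes $\nabla F$) is the easy computation, while the direction you dispatch in one sentence --- if $\nabla F$ is $\lambda$-unstable then $F$ is unstable --- is where all the work lies, and your sketch of it would fail as stated. Two problems arise. (a) The Hilbert--Mumford weight of $\nabla F$ on the Grassmannian is the weight of the initial Pl\"ucker coordinate, which is computed from the $n$ \emph{distinct} initial monomials of elements of the subspace $\nabla F$, and these need not be the initial monomials of the partials $\partial F/\partial x_i$ themselves (several partials can share an initial monomial). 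The paper needs a real argument here (Lemma \ref{key-1}): an upper-triangular change of coordinates, shown in Lemma \ref{L:sub} to preserve the initial Pl\"ucker coordinate, after which the initials of the partials become distinct --- and crucially this substitution is compatible with the gradient structure, i.e.\ the new spanning set is again the set of partials of a coordinate-changed $F$. (b) Even granting distinct initials, one cannot conclude that the \emph{same} $\lambda$ destabilizes $F$: a monomial of $F$ not divisible by $x_i$ carries no information about $\partial F/\partial x_i$, so positivity of $\sum_i w_\lambda\bigl(\init_\lambda(\partial F/\partial x_i)\bigr)$ does not bound the weights of all monomials of $F$. The paper resolves this with Lemma \ref{key-2}, a convexity argument on the state polytope: from the inequalities ``$w_\lambda(m)>\mu_i$ or $z_i=0$'' one multiplies by $z_i$ (killing the $z_i=0$ alternative), sums, and uses $\sum z_i=d+1$ to produce a \emph{new} linear functional $M=(d+1)L-\sum\mu_i z_i$ vanishing at the barycenter, i.e.\ a different $1$-PS of the same torus that destabilizes $F$. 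Your phrase ``the worst case is controlled because the $n$ partials span $\nabla F$ and their leading weights are governed by the weights $r_i$ summing to zero'' is not an argument and conceals precisely these two issues.

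Second, your polystability argument is incorrect. From ``$F$ stable has closed orbit'' and ``$\nabla$ is equivariant with finite fibers'' one cannot conclude that the orbit of $\nabla(F)$ is closed in the semistable locus: an equivariant morphism with finite fibers need not be proper on the semistable locus, and images of closed orbits need not be closed (think of an open equivariant immersion). What Proposition \ref{P:almost-embedding} gives is finiteness of the induced map $\overline{\nabla}$ on GIT \emph{quotients}, which only says that $\nabla(F)$ and the unique closed orbit in the closure of its orbit map to one of finitely many quotient points --- no contradiction ensues if the orbit of $\nabla(F)$ is not closed. (Invoking that proposition here is also delicate logically, since it is itself deduced from this theorem, though only from the semistability part.) The paper's actual proof of polystability is substantively different: it first shows, via the equality analysis in Lemma \ref{key-2} (forcing $\mu'_i=d\lambda_i$) and a direct monomial argument, that strict semistability of $\nabla(F)$ for stable $F$ forces a decomposition $F=G_1+\cdots+G_r$ into forms in disjoint variables with each $G_i$ indecomposable and $\SL(W_i)$-stable; it then applies Luna's criterion, reducing closedness of the $\SL(V)$-orbit of $\nabla F$ to closedness under the centralizer $\bigl(\GL(W_1)\times\cdots\times\GL(W_r)\bigr)\cap\SL(V)$, and disposes of the $\GL$-versus-$\SL$ discrepancy by the observation that every Pl\"ucker coordinate of $\nabla F$ contains exactly $n_i$ vectors from $\Sym^{d}W_i$, so a $1$-PS and its $\SL(W_i)$-renormalization act identically on $\nabla F$. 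Your stability criterion paragraph, by contrast, is in the right spirit (the equality case yields the splitting, and the converse is the explicit $1$-PS fixing $\nabla(F)$ as in Remark \ref{R:stable}), and your treatment of the case $\dim\nabla F<n$ matches the paper; but without repairing (a), (b), and the polystability step, the proposal does not constitute a proof.
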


In the case of binary forms (i.e., $n=2$), the above result was established by Alper and Isaev 
\cite[Proposition 5.2]{alper-isaev-binary}; in the case of ternary forms (i.e., $n=3$) of degree $d+1\leq 9$,
the result was established by David Benjamin Lim (unpublished). 

\begin{remark}\label{R:stable}
It is easy to see that $\nabla$ does not preserve \emph{stability}. Indeed, the gradient point 
of the stable Fermat hypersurface \[x_1^{d+1}+\cdots+x_n^{d+1}\] is $\lspan\langle x_1^{d},\dots,x_n^{d}\rangle$, which is only strictly semistable. 

More generally, if $F=G(x_1,\dots,x_r)+H(x_{r+1},\dots,x_n)$ for some $1\leq r \leq n-1$, then $\nabla(F)$ is fixed
by the one-parameter subgroup of $\SL(n)$ acting with weights 
\[
(-(n-r),\dots,-(n-r), r, \dots, r)\]
 on $\{x_1,\dots,x_n\}$.
In particular, $\nabla(F)$ is strictly semistable even when $F$ is stable. 
\end{remark}

\section{GIT quotient of the gradient morphism}\label{S:applications}
In this section, we describe the applications of Theorem \ref{T:main-restated}, deferring its proof to Section \ref{S:proof}.
The main theorem implies that we have a cartesian diagram of $\SL(V)$-morphisms 
\begin{equation*}
\xymatrix{ 
\bP\bigl(\Sym^{d+1} V\bigr)^{ss}  \ar[r]^{\nabla \qquad} & \Grass\bigl(n, \Sym^{d} V\bigr)^{ss} \\
\bP\bigl(\Sym^{d+1} V\bigr)_\Delta \ar[r] \ar@{^{(}->}[u] & \Grass\bigl(n, \Sym^{d} V\bigr)_{\Res}\ , \ar@{^{(}->}[u],  
}
\end{equation*}
where the vertical arrows are saturated open inclusions of affines. After forming the GIT quotients, we obtain 
a cartesian diagram
\begin{equation}\label{diagram}
\begin{aligned}
\xymatrix{ 
\bP\bigl(\Sym^{d+1} V\bigr)^{ss}\gitq \SL(V)  \ar[r]^{\overline{\nabla}\qquad} & \Grass\bigl(n, \Sym^{d} V\bigr)^{ss} \gitq \SL(V) \\
\bP\bigl(\Sym^{d+1} V\bigr)_\Delta \gitq \SL(V) \ar[r]^{\widetilde{\nabla}\qquad } \ar@{^{(}->}[u] 
& \Grass\bigl(n, \Sym^{d} V\bigr)_{\Res} \gitq \SL(V), \ar@{^{(}->}[u]
}
\end{aligned}
\end{equation}
where the top arrow $\overline{\nabla}:=\nabla_{/\SL(V)}$ is a projective morphism on the GIT quotients induced by $\nabla$,
and the bottom arrow $\widetilde{\nabla}$ is a morphism of affine GIT quotients. In what follows,
we show that $\overline{\nabla}$ is finite and birational onto its image, while 
$\widetilde{\nabla}$ is finite and injective.

Recall that by a result of Donagi \cite[Proposition 1.1]{donagi}, two hypersurfaces are projectively equivalent if 
and only if their gradient points are projectively equivalent. This however does not immediately imply that $\overline{\nabla}$ 
is injective because distinct $\SL(V)$-orbits can be identified when passing to a GIT quotient. 
In Proposition \ref{P:almost-embedding} below, we 
prove that our main theorem does imply injectivity of $\overline{\nabla}$
on the \emph{stable} locus. 

Alper and Isaev asked whether $\overline{\nabla}$ is in fact a closed embedding \cite[Question 3.3]{alper-isaev-binary}.
They note that establishing that $\widetilde{\nabla}$ is a closed embedding is one of the two steps sufficient to prove their main 
conjecture (namely, \cite[Conjecture 1.1]{alper-isaev-binary}). Proposition \ref{P:almost-embedding} 
implies that $\widetilde{\nabla}$ is a composition of a closed embedding 
and a bijective normalization morphism. 

We introduce the following notation. Given $F\in \bP \bigl(\Sym^{d+1} V\bigr)$, we denote by $T_F$ the tangent space at $F$ 
and by $N_{F}$ the normal space to the $\SL(V)$-orbit at $F$.  
Let $T_{\nabla F}$ be the tangent space at $\nabla F \in \Grass\bigl(n, \Sym^{d} V\bigr)$
and $N_{\nabla F}$ be the normal space to the $\SL(V)$-orbit at $\nabla F$.

\begin{prop}\label{P:almost-embedding}  \hfill
\begin{enumerate}
\item The morphism $\overline{\nabla}$ is a finite morphism of projective normal varieties. 
\item The restriction of \ $\overline{\nabla}$ to the stable locus is injective. 
\item The morphism $\widetilde{\nabla}$ is finite and injective. In particular, 
$\widetilde{\nabla}$ is a normalization of its image in $\Grass\bigl(n, \Sym^{d} V\bigr)_{\Res}$.
\item Given a stable point $F\in \bP\bigl(\Sym^{d+1} V\bigr)^{s}$, the map $N_{F} \to N_{\nabla F}$ 
induced by $\nabla$ is injective. 
 \end{enumerate}
\end{prop}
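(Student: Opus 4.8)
The plan is to deduce parts (1)--(3) from Theorem~\ref{T:main-restated}, Donagi's theorem~\cite{donagi}, and standard GIT, and to prove the infinitesimal statement (4) by a direct computation with Euler's identity. For (2), I would take stable points $[F_1],[F_2]\in\bP\bigl(\Sym^{d+1}V\bigr)$ with $\overline{\nabla}([F_1])=\overline{\nabla}([F_2])$. By Theorem~\ref{T:main-restated} each $\nabla(F_i)$ is polystable, hence its $\SL(V)$-orbit is closed in $\Grass\bigl(n,\Sym^{d}V\bigr)^{ss}$; since distinct closed orbits map to distinct points of a GIT quotient, the hypothesis forces $\nabla(F_1)$ and $\nabla(F_2)$ into a single $\SL(V)$-orbit. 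Donagi's theorem then gives that $F_1$ and $F_2$ are projectively equivalent, i.e.\ lie in one $\SL(V)$-orbit, so $[F_1]=[F_2]$ in the quotient.

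For part (1), normality is immediate: the two quotients are GIT quotients of the smooth varieties $\bP\bigl(\Sym^{d+1}V\bigr)$ and $\Grass\bigl(n,\Sym^{d}V\bigr)$, hence normal, and they are projective, so $\overline{\nabla}$ is a proper morphism of normal projective varieties. The crux is finiteness, and the key observation is a clean pullback relation for polarizations. On the semistable locus, where $\nabla$ is a morphism, $\nabla(F)$ has Plücker coordinates given by the coefficients of $\partial F/\partial x_1\wedge\cdots\wedge\partial F/\partial x_n$, which are homogeneous of degree $n$ in $F$; hence $\nabla^{*}\cO_{\Grass}(1)=\cO_{\bP}(n)$. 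Since the GIT polarizations $L_{\bP}$ and $L_{\Grass}$ descend (after passing to a suitable power) from $\cO_{\bP}(1)$ and $\cO_{\Grass}(1)$, it follows that $\overline{\nabla}^{*}L_{\Grass}$ is a positive power of the ample bundle $L_{\bP}$, and is therefore ample. A proper morphism that pulls an ample bundle back to an ample bundle is finite, which gives (1).

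Part (3) follows by base change. Diagram~\eqref{diagram} is cartesian, so $\bP\bigl(\Sym^{d+1}V\bigr)_\Delta\gitq\SL(V)=\overline{\nabla}^{-1}\bigl(\Grass\bigl(n,\Sym^{d}V\bigr)_{\Res}\gitq\SL(V)\bigr)$, and the restriction of the finite morphism $\overline{\nabla}$ to the preimage of this open set is again finite; thus $\widetilde{\nabla}$ is finite. Its domain parameterizes smooth forms, which are stable for $d\ge 2$, so $\widetilde{\nabla}$ is injective by (2). A finite injective morphism is birational onto its image, and as its source is normal it is the normalization of that image, as claimed.

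Finally, part (4) is an infinitesimal computation. Identifying $T_F=\Sym^{d+1}V/\CC F$ and $T_{\nabla F}=\Hom\bigl(\nabla F,\Sym^{d}V/\nabla F\bigr)$, the differential sends the class of $G$ to the homomorphism $\partial F/\partial x_k\mapsto \partial G/\partial x_k \bmod \nabla F$, while the orbit direction of $A\in\sl(V)$ is $\partial F/\partial x_k\mapsto A\cdot(\partial F/\partial x_k)\bmod\nabla F$. If $[G]\in N_F$ lies in the kernel of $N_F\to N_{\nabla F}$, there is $A\in\sl(V)$ with $\partial G/\partial x_k\equiv A\cdot(\partial F/\partial x_k)\bmod\nabla F$ for all $k$. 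Setting $H:=G-A\cdot F$ and using $\partial_k(A\cdot F)=A\cdot(\partial_k F)+\sum_j a_{kj}\,\partial_j F$, one finds $\partial H/\partial x_k\in\nabla F$ for every $k$; writing $\partial H/\partial x_k=\sum_j c_{kj}\,\partial F/\partial x_j$ with scalars $c_{kj}$ and applying Euler's identity $(d+1)H=\sum_k x_k\,\partial H/\partial x_k$ shows $H\in\mathfrak{gl}(V)\cdot F$. Hence $G=H+A\cdot F\in\mathfrak{gl}(V)\cdot F=\sl(V)\cdot F+\CC F$, so $[G]=0$ in $N_F$ and the map is injective. I expect the genuine difficulty of the proposition to sit in the finiteness assertion of (1): a direct approach would require a case analysis of the strictly semistable boundary, and the only real content needed to bypass it is the pullback identity $\nabla^{*}\cO_{\Grass}(1)=\cO_{\bP}(n)$ together with the bookkeeping that the GIT polarizations are the relevant descents.
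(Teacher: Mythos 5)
Your proof is correct, but it takes a genuinely different route from the paper's at three of the four parts, and the differences are worth recording. For part (1) the paper argues abstractly: by Kempf's descent lemma both GIT quotients are projective of Picard number one, so the morphism $\overline{\nabla}$ between them is automatically finite; you instead compute the polarization explicitly, observing that the Pl\"ucker coordinates of $\nabla(F)$ are the $n\times n$ minors of the matrix of partials and hence of degree $n$ in the coefficients of $F$, so that $\nabla^{*}\cO_{\Grass}(1)\cong\cO_{\bP}(n)$ on the semistable locus. Your version makes the positivity concrete (and certifies non-constancy for free), at the cost of some descent bookkeeping: strictly speaking you only obtain $\bigl(\overline{\nabla}^{*}L_{\Grass}\bigr)^{\otimes m}\cong L_{\bP}^{\otimes nk}$ for suitable powers $m,k>0$, which is still enough for ampleness and hence finiteness. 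For part (2) your reduction to $\nabla F_1=\nabla F_2$ via polystability (Theorem \ref{T:main-restated}) and separation of closed orbits is exactly the paper's; but where you then invoke Donagi's theorem wholesale, the paper deliberately replaces Donagi's deformation-theoretic step by the finiteness already established in (1), applied to the pencil $sF_1+tF_2$, whose generic member is stable and has gradient point $\nabla F_1$. Your shortcut is legitimate given the form of Donagi's result quoted before the proposition, but it outsources to the literature a step the paper keeps self-contained. For part (4) the difference runs the other way: the paper reduces (4) to the pencil argument of (2), which requires the mildly delicate choice of a \emph{stable} lift $G$ of the normal vector (implicitly, adding a generic multiple of $F$), whereas your direct computation --- $\partial_k(G-A\cdot F)\in\nabla F$ for all $k$, then Euler's identity forcing $G-A\cdot F\in\mathfrak{gl}(V)\cdot F=\sl(V)\cdot F+\CC F$ --- is correct, shorter, and needs no stability of the deformation direction at all; in effect you have reinstated, at first order, the very deformation argument of Donagi that the paper avoids in (2), so the two proofs distribute the same two ingredients (finiteness of $\overline{\nabla}$ and the Euler/symmetrizer computation) differently across parts (2) and (4). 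The one point you should patch is the $d=1$ case in (3): smooth quadrics are strictly semistable rather than stable, so injectivity does not follow from (2) there; the paper disposes of this by noting that the domain of $\widetilde{\nabla}$ is then a single point.
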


\begin{proof}
(1) The fact that the morphism exists follows from Theorem \ref{T:main-restated}.
By Kempf's descent lemma, both GIT quotients are projective varieties of Picard number $1$.
Hence $\overline{\nabla}$ is a finite morphism. The normality of the domain and target follows 
from the preservation of normality under GIT quotients. 

(2) We now establish injectivity of $\overline{\nabla}$ on the stable locus.
Suppose $\overline{\nabla}(F_1)=\overline{\nabla}(F_2)$ for two stable hypersurfaces $F_1$
and $F_2$. 
By Theorem \ref{T:main-restated}, the $\SL(V)$-orbits of $\nabla F_1$ and $\nabla F_2$ 
are closed in the semistable locus of the Grassmannian. Since they are identified in the GIT quotient, the two orbits must be equal. 
Acting by an element of $\SL(V)$, we thus
can assume that $\nabla F_1=\nabla F_2$. It is easy to see then that 
\[
\nabla(sF_1+tF_2)=\nabla F_1 \quad \text{for all $[s:t]\in \bP^1 \setminus D$, where $D$ is some finite set.}
\]
(Up to this point, our argument followed Donagi's proof of \cite[Proposition 1.1]{donagi}. In what follows, 
we replace Donagi's deformation theory argument by the already established Part (1) of this proposition.)

Since $\nabla(sF_1+tF_2)$ is semistable for $[s:t]\in \bP^1 \setminus D$,
we conclude that $\bP^1 \setminus D$ lies in the semistable locus and is contracted by $\nabla$ to a point. 
Moreover, the generic point of $\bP^1$ is stable because $F_1$ is stable. 
Since the fibers of $\overline{\nabla}$ are finite, we conclude that $\bP^1 \setminus D$ must
be contracted to a point in the GIT quotient $\bP \bigl(\Sym^{d+1} V\bigr)^{ss} \gitq \SL(V)$. 
This shows that $\bP^1 \setminus D$ lies entirely in an $\SL(V)$-orbit, proving that $F_1$ and $F_2$ are in the same $\SL(V)$-orbit.

(3) We note that $\widetilde{\nabla}$ is finite by base change. Since smooth hypersurfaces of degree $\geq 3$ are stable,
we conclude that $\widetilde{\nabla}$ is injective by Part (2) if $d\geq 2$; 
if $d=1$, the domain of $\widetilde{\nabla}$ is a point. By normality of $\bP\bigl(\Sym^{d+1} V\bigr)_\Delta \gitq \SL(V)$,
it follows that $\widetilde{\nabla}$ is a normalization of its closed image. 

(4) Suppose that for a stable hypersurface $F$,
some non-zero vector $v\in N_F$ maps to $0\in N_{\nabla F}$.
Since the differential of $\nabla$ restricts to a surjective map on the tangent spaces between the $\SL(V)$-orbits, 
we can find a lift of $v$ to $T_{F}$ that maps to $0$ in $T_{\nabla F}$.
This lift corresponds to a first-order deformation $F+\epsilon G$, where $G$ is some
stable form and $\epsilon^2=0$. The induced first order deformation of $\nabla F$ is an element of \[
\Hom\left(\nabla F, \ \Sym^{d} V \big{/} \nabla F\right)\]
given by 
\[
\partial F/\partial x_i \mapsto \partial G/\partial x_i \mod \nabla F, \quad \text{ for $i=1,\dots, n$}.
\] 
For this first order deformation of $\nabla F$ to be $0$ in the tangent space of the Grassmannian, we must have 
$\nabla G \subset \nabla F$, which implies $\nabla G=\nabla F$ since $\dim \nabla G=\dim \nabla F=n$.
As we have already seen, $\nabla G=\nabla F$ for two stable forms $F$ and $G$
implies that the semistable locus of the line joining $F$ and $G$ lies in the same $\SL(V)$-orbit,
which of course means that $v=0$. A contradiction! 
\end{proof}

\begin{remark}
Suppose $F$ is a stable hypersurface. Then $\nabla F$ is polystable by Theorem \ref{T:main-restated}. By the Luna's \'{e}tale slice theorem,
in a neighborhood of $F$, we have that $\overline{\nabla}$ \'{e}tale locally looks like
\[
N_{F}\gitq \Stab(F) \to N_{\nabla F}\gitq \Stab(\nabla F).
\]
If $\nabla$ is stabilizer preserving at $F$, that is $\Stab(F)=\Stab(\nabla F)$, then Proposition \ref{P:almost-embedding}
implies that $\overline{\nabla}$ is unramified at $F$. However, $\nabla$ is not stabilizer preserving even on 
the stable locus as Remark \ref{R:stable} shows. 
In general, it seems to be a difficult problem to control stabilizers of hypersurfaces
and, especially, of their gradient points.
\end{remark}

\section{Semistability of the gradient point}  
\label{S:proof}

\subsection{Semistability of linear spaces of homogeneous forms}
\label{S:grassmannian}
We begin by reviewing the Hilbert-Mumford Numerical Criterion 
for the Grassmannian $\Grass\bigl(k, \Sym^{m} V\bigr)$.

In what follows, we always let $\lambda$ be 
a one-parameter subgroup ($1$-PS) of $\SL(V)$ acting diagonally on 
a basis $\{x_1,\dots, x_n\}$
with weights 
\[
\lambda_1 \leq \cdots \leq \lambda_n \qquad \text{(N.B. $\sum_{i=1}^n \lambda_i=0$).}
\] 
\subsubsection{$\lambda$-ordering on monomials} 
The $\lambda$-weight of a monomial $x_1^{a_1}\cdots x_n^{a_n} \in \Sym V$ is defined to be
\[
w_{\lambda}(x_1^{a_1}\cdots x_n^{a_n}):=\sum_{i=1}^n a_i\lambda_i.
\]
The $\lambda$-weight induces a monomial ordering $<_{\lambda}$ on the monomials in $\Sym^m V$
given by the $\lambda$-weight, with ties broken lexicographically. Precisely, 
for two degree $m$ monomials with multi-degrees $(a_1,\dots, a_n)$ and $(b_1,\dots, b_n)$, we set
\[
x_1^{a_1}\cdots x_n^{a_n} <_{\lambda} x_1^{b_1}\cdots x_n^{b_n} 
\]
if and only if 
\begin{itemize} 
\item either $w_{\lambda}(x_1^{a_1}\cdots x_n^{a_n}) < w_{\lambda}(x_1^{b_1}\cdots x_n^{b_n})$,
\item or $w_{\lambda}(x_1^{a_1}\cdots x_n^{a_n}) = w_{\lambda}(x_1^{b_1}\cdots x_n^{b_n})$, and 
for some $r=1,\dots, n-1$, we have $a_i=b_i$ for $i=1,\dots, r$, and $a_{r+1}>b_{r+1}$.
\end{itemize}
Set $N=\binom{n+m-1}{m}$ and let 
\[X_1 <_{\lambda} \cdots <_{\lambda} X_N\] 
be the degree $m$ monomials in the variables $\{x_1,\dots, x_n\}$, ordered by $<_{\lambda}$.

Given $F\in \Sym^m V$, \emph{the initial monomial of $F$ with respect to $\lambda$}, denoted $\init_{\lambda}(F)$, 
is the smallest, with respect to  $<_{\lambda}$, 
monomial appearing with a non-zero coefficient in the expansion of $F$ in terms of the monomials $X_1,\dots, X_N$.

\subsubsection{$\lambda$-ordering on wedges}\label{order-grassmann}
The monomial ordering $<_{\lambda}$ induces an ordering on the decomposable elements 
of the form $X_{i_1} \wedge \cdots \wedge X_{i_k}$ in  $\wedge^k \Sym^m V$. 
First, we define the $\lambda$-weight of $X_{i_1} \wedge \cdots \wedge X_{i_k}$ to be 
\[
w_{\lambda}(X_{i_1} \wedge \cdots \wedge X_{i_k}):=\sum_{r=1}^k w_{\lambda}(X_{i_r}).
\]
Next, for two multi-indices $i_1<\cdots<i_k$ and $j_1<\cdots<j_k$, we set 
\[
X_{i_1} \wedge \cdots \wedge X_{i_k} <_{\lambda} X_{j_1} \wedge \cdots \wedge X_{j_k}
\]
if and only if 
\begin{itemize} 
\item either $w_{\lambda}(X_{i_1} \wedge \cdots \wedge X_{i_k})< w_{\lambda}(X_{j_1} \wedge \cdots \wedge X_{j_k})$,
\item or $w_{\lambda}(X_{i_1} \wedge \cdots \wedge X_{i_k})= w_{\lambda}(X_{j_1} \wedge \cdots \wedge X_{j_k})$, and 
for some $r=1,\dots, k-1$, we have $i_s=j_s$ for $s=1,\dots, r$, and $i_{r+1}<j_{r+1}$.
\end{itemize}

\begin{definition}
Given $[W]\in  \Grass\bigl(k, \Sym^{m} V\bigr)$, let $X_{i_1}, \dots, X_{i_k}$ be the $k$ distinct initial monomials of the elements in 
$W$ with respect to a $1$-PS $\lambda$.
We call $X_{i_1}\wedge \cdots \wedge X_{i_k}$ \emph{the initial Pl\"ucker coordinate of $W$ with respect to $\lambda$}.

\end{definition}

\begin{lemma}[Hilbert-Mumford Numerical Criterion for Grassmannians]\label{HM-grass}
A point $[W]\in  \Grass\bigl(k, \Sym^{m} V\bigr)$ is unstable (resp., strictly semistable)
with respect to $\lambda$
if and only if the $\lambda$-weight of the initial Pl\"ucker coordinate of $W$ with respect to $\lambda$ is positive
(resp., zero).
\end{lemma}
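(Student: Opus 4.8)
The plan is to pull back to the Pl\"ucker embedding $\Grass\bigl(k,\Sym^{m} V\bigr)\hookrightarrow \bP\bigl(\wedge^k \Sym^{m} V\bigr)$ and apply the Hilbert--Mumford Numerical Criterion \cite[Theorem 2.1]{GIT} in this ambient projective space. The decomposable wedges $X_{i_1}\wedge\cdots\wedge X_{i_k}$ form a basis of $\wedge^k \Sym^{m} V$ consisting of $\lambda$-eigenvectors, the eigenvalue of $X_{i_1}\wedge\cdots\wedge X_{i_k}$ being its $\lambda$-weight $w_{\lambda}(X_{i_1}\wedge\cdots\wedge X_{i_k})$. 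Writing the Pl\"ucker vector of $[W]$ in this eigenbasis, the numerical criterion says that $[W]$ is $\lambda$-unstable (resp.\ strictly $\lambda$-semistable) precisely when the smallest weight among the wedges occurring with non-zero coefficient is positive (resp.\ zero). Thus the whole statement reduces to identifying this minimal occurring weight with the $\lambda$-weight of the initial Pl\"ucker coordinate.

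To make that identification, first I would choose a basis $v_1,\dots,v_k$ of $W$ in reduced echelon form with respect to $<_{\lambda}$, using the convention (as in the definition of $\init_{\lambda}$) that the leading monomial is the $<_{\lambda}$-\emph{smallest} one in the support. A Gaussian elimination argument shows such a basis exists, that its initial monomials $X_{i_1}<_{\lambda}\cdots<_{\lambda}X_{i_k}$ are distinct, and that the set $\{X_{i_1},\dots,X_{i_k}\}$ equals $\{\init_{\lambda}(w)\mid 0\neq w\in W\}$; in particular there are exactly $k$ distinct initial monomials, so the initial Pl\"ucker coordinate is well defined. In the monomial basis $X_1<_{\lambda}\cdots<_{\lambda}X_N$ the coefficient matrix of $v_1,\dots,v_k$ is echelon: row $\ell$ has its first non-zero entry, equal to $1$, in the column of $X_{i_\ell}$, and $i_1<\cdots<i_k$.

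Next I would compute the Pl\"ucker coordinates as the maximal minors of this matrix. The minor on the columns $I=\{i_1,\dots,i_k\}$ equals $1$, so $X_{i_1}\wedge\cdots\wedge X_{i_k}$ occurs with non-zero coefficient. For any other column set $J=\{j_1<\cdots<j_k\}$ with non-vanishing minor, the echelon structure forces $j_\ell\geq i_\ell$ for every $\ell$: otherwise the first $\ell$ selected columns would be supported on only $\ell-1$ rows, hence linearly dependent, making the minor vanish. Since $<_{\lambda}$ orders monomials by weight first, $j_\ell\geq i_\ell$ gives $w_{\lambda}(X_{j_\ell})\geq w_{\lambda}(X_{i_\ell})$, and summing over $\ell$ shows that the initial Pl\"ucker coordinate has the least $\lambda$-weight (indeed is $<_{\lambda}$-minimal) among all non-zero Pl\"ucker coordinates of $W$. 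Combined with the numerical criterion of the first paragraph, this yields the lemma.

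The main obstacle is the middle step: verifying, under the non-standard convention that the initial monomial is the \emph{smallest} one, that the combinatorially defined initial Pl\"ucker coordinate really is the minimal-weight non-zero coordinate. This rests on the echelon reduction together with the minor-vanishing estimate $j_\ell\geq i_\ell$; the only genuine subtlety is the bookkeeping of the weight-then-lexicographic tie-breaking, so that $<_{\lambda}$-minimality of a wedge and minimality of its $\lambda$-weight are made to agree.
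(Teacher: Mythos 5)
Your proposal is correct and follows essentially the same route as the paper: pull back along the Pl\"ucker embedding, apply the standard Hilbert--Mumford criterion \cite[Theorem 2.1]{GIT} to $\wedge^k \Sym^m V$, and observe that the initial Pl\"ucker coordinate realizes the minimal $\lambda$-weight among the coordinates non-vanishing on $[W]$. The paper dismisses that last point with ``Clearly,'' whereas you substantiate it via the echelon basis and the minor-vanishing estimate $j_\ell \geq i_\ell$ --- a welcome elaboration, but not a different argument.
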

\begin{proof}
Clearly, $X_{i_1}\wedge \cdots \wedge X_{i_k}$ has the least $\lambda$-weight among all Pl\"ucker coordinates (with 
respect to the basis $\{X_1,\dots, X_N\}$ of $\Sym^{m} V$) 
that are non-zero on $[W]$. The claim follows from the usual Hilbert-Mumford Numerical Criterion applied to $\wedge^k \Sym^m V$.
\end{proof}

We will need the following observation:
\begin{lemma}\label{L:sub}
Suppose $W=\lspan \left\langle g_1, \dots, g_k \right\rangle \in \Grass\bigl(k, \Sym^{m} V\bigr)$.
Suppose $\lambda$ is a $1$-PS of $\SL(V)$  acting diagonally on $\{x_1,\dots, x_n\}$
with weights $\lambda_1 \leq \cdots \leq \lambda_n$.
Consider a change of coordinates
\begin{equation}\label{E:sub}
\begin{aligned}
x_1& \mapsto x_1+c_{12}x_2+\cdots+ c_{1n}x_n \\
x_2&\mapsto\phantom{{}=1111} x_2+\cdots+ c_{2n}x_n \\
\vdots \\
x_n&\mapsto \phantom{{}=1111111111111111}x_n
\end{aligned}
\end{equation}
Let $g'_i(x_1,\dots,x_n)=g_i(x_1+c_{12}x_2+\cdots+ c_{1n}x_n, \ x_2+\cdots+ c_{2n}x_n, \ \dots, \ x_n)$
and
\[W':=\operatorname{span}\left\langle g'_1, \dots, g'_k \right\rangle.\]
Then the initial Pl\"ucker coordinates of $W$ and $W'$ with respect to $\lambda$ are the same.
In particular, if $W$ is $\lambda$-unstable (resp., $\lambda$-strictly semistable), then $W'$ is also $\lambda$-unstable
(resp., $\lambda$-strictly semistable). 
\end{lemma}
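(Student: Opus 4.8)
The plan is to realize the coordinate change \eqref{E:sub} as a single linear operator on $\Sym^{m} V$ that is \emph{unitriangular} with respect to the $<_{\lambda}$-ordered monomial basis, and to observe that such an operator preserves the initial monomial of every homogeneous form. Concretely, let
\[
\sigma\co \Sym^{m} V \longrightarrow \Sym^{m} V, \qquad \sigma(g)(x_1,\dots,x_n) = g(x_1+c_{12}x_2+\cdots+c_{1n}x_n,\ \dots,\ x_n),
\]
be the degree-preserving precomposition operator, so that $g'_i=\sigma(g_i)$ and hence $W'=\sigma(W)$. Because $\sigma$ is a linear isomorphism, the lemma reduces to the single claim that $\init_{\lambda}(\sigma(g))=\init_{\lambda}(g)$ for every nonzero $g\in \Sym^{m} V$: granting this, the $k$ distinct initial monomials of $W'=\sigma(W)$ coincide with those of $W$, so the two initial Pl\"ucker coordinates agree.

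The heart of the argument is the behavior of $\sigma$ on a single monomial. Expanding
\[
\sigma(x_1^{a_1}\cdots x_n^{a_n})=\prod_{i=1}^{n}\Bigl(x_i+\sum_{j>i}c_{ij}x_j\Bigr)^{a_i},
\]
every monomial $x^{b}$ that occurs arises from $x^{a}$ by transporting exponents from lower- to higher-indexed variables only. Consequently the partial sums satisfy $\sum_{i\le r}b_i\le \sum_{i\le r}a_i$ for every $r$, with equality at $r=n$. Two consequences follow from this one inequality. First, partial (Abel) summation together with $\lambda_1\le\cdots\le\lambda_n$ gives $w_{\lambda}(x^{b})\ge w_{\lambda}(x^{a})$. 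Second, if $w_{\lambda}(x^{b})=w_{\lambda}(x^{a})$ and $b\ne a$, then at the first index $r+1$ where $b$ and $a$ differ the partial-sum inequality forces $a_{r+1}>b_{r+1}$, so $x^{a}<_{\lambda}x^{b}$ by the lexicographic tie-break. Thus every monomial other than $x^{a}$ appearing in $\sigma(x^{a})$ is strictly $<_{\lambda}$-larger than $x^{a}$; equivalently $\sigma(X_i)=X_i+(\text{terms }>_{\lambda}X_i)$ for each basis monomial $X_i$, so $\sigma$ is unitriangular in the ordered basis $X_1<_{\lambda}\cdots<_{\lambda}X_N$.

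Given unitriangularity, preservation of initial monomials is formal. If $\init_{\lambda}(g)=X_{i_0}$, then expanding $g$ in the ordered basis and applying $\sigma$, the only contribution to $X_{i_0}$ comes from the diagonal term of $\sigma(X_{i_0})$ (all $\sigma(X_i)$ with $i<i_0$ have vanishing coefficient in $g$, and produce only monomials $\ge_{\lambda}X_i$), so the coefficient of $X_{i_0}$ in $\sigma(g)$ equals that in $g$ and is nonzero, while no strictly smaller monomial is introduced; hence $\init_{\lambda}(\sigma(g))=X_{i_0}$. Applying this to all $g\in W$ shows $W$ and $W'=\sigma(W)$ share the same set of $k$ distinct initial monomials, and therefore the same initial Pl\"ucker coordinate. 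The final assertion about $\lambda$-(in)stability is then immediate from Lemma \ref{HM-grass}, which reads off instability and strict semistability solely from the $\lambda$-weight of the initial Pl\"ucker coordinate.

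The only delicate point is the compatibility between the two layers of $<_{\lambda}$ and the direction of the substitution: the substitution raises variable indices, the weights are nondecreasing in the index, and the tie-break favors higher exponents in earlier variables, so all three conventions must pull the same way for $x^{a}$ to stay the leading term. The partial-sum inequality $\sum_{i\le r}b_i\le\sum_{i\le r}a_i$ is precisely the combinatorial statement encoding this compatibility, and it dispatches the weight comparison and the lexicographic tie-break uniformly; the rest of the proof is bookkeeping.
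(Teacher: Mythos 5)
Your proof is correct and takes essentially the same route as the paper: both rest on the key observation (the paper's \eqref{induced}) that the substitution \eqref{E:sub} acts unitriangularly on the $<_{\lambda}$-ordered monomial basis of $\Sym^{m}V$, from which invariance of the initial Pl\"ucker coordinate follows. The differences are cosmetic --- you verify the unitriangularity explicitly via the partial-sum inequality and Abel summation (a step the paper merely asserts with ``Notice that''), and you finish by showing $\init_{\lambda}(\sigma(g))=\init_{\lambda}(g)$ for each element of $W$ rather than by the paper's column operations on the $k\times k$ minors of the coordinate matrix.
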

\begin{remark} The above lemma is closely related to a more general result of Kempf,
who proved that if $\lambda$ is a worst destabilizing $1$-PS, then all conjugates
of $\lambda$ by the elements of the unipotent radical 
of the associated parabolic subgroup $P(\lambda)$ are also worst destabilizing $1$-PS's
\cite[Theorem 2.2]{kempf}.
\end{remark}

\begin{proof}[Proof of Lemma \ref{L:sub}]
Let $A$ be the matrix of $\{g_1,\dots, g_k\}$ in the basis $\{X_1,\dots, X_N\}$, such that the $i^{th}$ row
of $A$ is the coordinate vector of $g_i$.
Then the Pl\"ucker coordinates of $W$ with respect to $\{X_1,\dots, X_N\}$ correspond
to the $k\times k$ minors of $A$, which are in turn ordered by $<_{\lambda}$ as defined in \S\ref{order-grassmann}.

Notice that the upper triangular transformation \eqref{E:sub} induces an upper triangular transformation 
on the degree $m$ monomials:
\begin{equation}\label{induced}
X_i \mapsto X_i+C_{i i+1} X_{i+1}+\cdots+C_{i N} X_N.
\end{equation}
It follows that the matrix $A'$ of $\{g'_1,\dots, g'_k\}$ 
is obtained from $A$ by the following column operations: 
\begin{itemize}
\item a multiple of the $i^{th}$ column is added to the $j^{th}$ column
only if $i<j$. 
\end{itemize}
Evidently, the initial Pl\"ucker coordinate 
remains unchanged under these column operations
and the claim follows.
\end{proof}

\subsection{{Proof of Theorem \ref{T:main-restated}}}

It is straightforward to see that unstable polynomials have unstable gradient points.
Indeed, suppose $F$ is destabilized by a $1$-PS acting diagonally 
on a basis $x_1,\dots, x_{n}$ with weights \[
\lambda_1 \leq \cdots \leq \lambda_n.
\]
Then all monomials of $F(x_1,\dots,x_{n})$ have positive $\lambda$-weight. It follows
that all monomials of $\partial F/\partial x_i$ have weight greater than $-\lambda_i$. Hence
all non-zero Pl\"ucker coordinates of $\nabla F$ have weight greater than 
\[
\sum_{i=1}^n (-\lambda_i)=0.
\]
This shows that $\nabla F$ is also destabilized by $\lambda$. 

We now proceed to prove the reverse implication, which is the heart of the theorem. 
To begin, if $\nabla F$ is not $n$-dimensional, then in some coordinate system we have $\partial F/\partial x_1=0$. 
It follows that $F$ is a polynomial in $x_2,\dots, x_n$ and so is destabilized by the $1$-PS with weights
$(-(n-1), 1, \dots, 1)$. 

Suppose $\nabla F$ is an unstable point in $\Grass\bigl(n, \Sym^{d}V\bigr)$.
Let $\lambda$ be a destabilizing $1$-PS acting diagonally 
on a basis $x_1,\dots, x_{n}$ of $V$ with weights 
\[
\lambda_1 \leq \cdots \leq \lambda_n.
\]

The following is the first of the two key results used in our proof of Theorem \ref{T:main-restated}:
\begin{lemma}\label{key-1}
After a change of variables as in \eqref{E:sub}, we can assume that the initial 
monomials $\init_{\lambda}(\partial F/\partial x_i)$ are distinct for $i\in \{1,\dots,n\}$.
\end{lemma}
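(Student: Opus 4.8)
The plan is to reduce to the case of distinct initial monomials by an iterative tie-breaking procedure driven by a monovariant, using only elementary upper-triangular substitutions.

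I would first record the effect of a single elementary substitution $L\co x_p \mapsto x_p + c\,x_q$ with $p<q$ (all other variables fixed), which is the special case of \eqref{E:sub} with $c_{pq}=c$ and all other parameters zero. Writing $g_i=\partial F/\partial x_i$ and letting $F'=F\circ L$, the chain rule gives
\begin{align*}
\partial F'/\partial x_p &= g_p\circ L,\\
\partial F'/\partial x_q &= g_q\circ L + c\,(g_p\circ L),\\
\partial F'/\partial x_m &= g_m\circ L\qquad(m\neq p,q).
\end{align*}
Because $L$ is upper triangular, the computation in the proof of Lemma \ref{L:sub} shows that $L$ induces an upper-triangular transformation of the degree-$d$ monomials, hence $\init_\lambda(g_i\circ L)=\init_\lambda(g_i)$ for every $i$. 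Thus such a substitution leaves the initial monomial of every partial unchanged, \emph{except} that it replaces the $q$-th partial by $g_q\circ L + c\,(g_p\circ L)$.

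Next I would use this to break one coincidence at a time. Suppose $\init_\lambda(g_p)=\init_\lambda(g_q)=:\mu$ for some $p<q$. Choosing $c$ so as to cancel the $\mu$-coefficient in $g_q\circ L + c\,(g_p\circ L)$, I obtain a new form $F'$ whose $q$-th partial has initial monomial strictly $>_\lambda\mu$, while all other initial monomials are unchanged. The $q$-th partial cannot drop to zero: the substitution is invertible, so $\dim\nabla F'=\dim\nabla F=n$ and the partials remain linearly independent. Hence a single elementary substitution strictly raises exactly one of the colliding initial monomials in the order $<_\lambda$ and fixes all the others.

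To guarantee termination I would track the monovariant $\Phi(F)=\sum_{i=1}^n r_i$, where $\init_\lambda(g_i)=X_{r_i}$ in the ordered list $X_1<_\lambda\cdots<_\lambda X_N$ of degree-$d$ monomials. Each elementary step above increases $\Phi$ by a positive integer, and $\Phi$ is bounded above, so after finitely many steps no coincidence remains; whenever a coincidence is present, the move described is available, so the process only halts at a collision-free configuration. Since the upper-triangular unipotent substitutions form a group, the composite of these elementary substitutions is again of the form \eqref{E:sub}, and for the resulting form the monomials $\init_\lambda(\partial F/\partial x_i)$ are pairwise distinct. The only substantive point — the main obstacle — is the local move: verifying via the chain-rule formula and Lemma \ref{L:sub} that the elementary substitution preserves the initial monomials of all non-targeted partials while strictly raising the targeted one and never annihilating it. Once this local step is pinned down, the global statement follows immediately from the boundedness of the integer-valued monovariant $\Phi$.
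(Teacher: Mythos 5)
Your proof is correct, and its engine is exactly the paper's: the chain-rule identity showing that an upper-triangular substitution \eqref{E:sub} leaves $\partial(F\circ L)/\partial x_m = g_m\circ L$ for $m\neq q$ while replacing the $q$-th partial by $g_q\circ L + c\,(g_p\circ L)$, combined with the observation from the proof of Lemma \ref{L:sub} (via \eqref{induced}) that such substitutions preserve each $\init_\lambda(g_i)$ \emph{together with its coefficient} --- this last point is what legitimizes your choice of $c$, since $L$ itself depends on $c$ but the $\mu$-coefficients of $g_p\circ L$ and $g_q\circ L$ equal those of $g_p$ and $g_q$, so the coefficient to be cancelled is the $c$-independent expression $b+ca$. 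Where you genuinely differ is the global bookkeeping. The paper runs a structured induction pinned to the initial Pl\"ucker coordinate $X_{i_1}\wedge\cdots\wedge X_{i_n}$ of $\nabla F$: at step $r+1$ it takes the smallest remaining index $j_{r+1}$ with $\init_{\lambda}(F_{j_{r+1}})=X_{i_{r+1}}$ and performs one batch substitution $x_{j_{r+1}}\mapsto x_{j_{r+1}}-\sum_{j>j_{r+1}}c_{j_{r+1}j}\,x_j$ clearing the $X_{i_{r+1}}$-coefficient from \emph{all} later partials simultaneously (even those whose initial monomial is different), so the process terminates after exactly $n$ steps and, as a by-product, identifies the resulting initials with the components of the invariant initial Pl\"ucker coordinate. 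You instead break one collision at a time and enforce termination with the integer monovariant $\Phi$; this is more elementary, needs no reference to Pl\"ucker coordinates inside the lemma, and is fully adequate downstream: once the initials are distinct, the initial Pl\"ucker coordinate of $\nabla F$ is automatically $\init_\lambda(F_1)\wedge\cdots\wedge\init_\lambda(F_n)$, and its $\lambda$-weight is unchanged by Lemma \ref{L:sub}, which is all the subsequent proof of Theorem \ref{T:main-restated} uses. Two points you handle correctly but should keep explicit: your non-vanishing argument for the modified partial relies on $\dim\nabla F=n$, which does hold where the lemma is invoked (the case $\dim\nabla F<n$ is disposed of beforehand in the theorem's proof); and raising $\init_\lambda(g_q)$ can create fresh collisions with other partials, which is harmless precisely because $\Phi$ strictly increases and is bounded by $nN$, as you note.
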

\begin{proof}[Proof of Lemma \ref{key-1}]
We will apply Lemma \ref{L:sub} with $k=n$ and $m=d$.
In particular, $\{X_1,\dots, X_N\}$, where $N=\binom{n+d-1}{d}$, will be the set 
of monomials in $\Sym^{d} V$ ordered by $<_{\lambda}$. In what follows,
we will often write $F_i$ to denote $\partial F/\partial x_i$.

It follows from the proof of Lemma \ref{L:sub} that an upper-triangular substitution \eqref{E:sub}
transforms the matrix of $\{F_1, \dots, F_n\}$
in the basis $\{X_1,\dots, X_N\}$ by some sequence of the following operations:
\begin{itemize}
\item For $a<b$, add a multiple of the $a^{th}$ column to the $b^{th}$ column.
\item For $c<d$, add a multiple of the $c^{th}$ row to the $d^{th}$ row.
\end{itemize}
The point of the present lemma is that by choosing a sequence of the above operations carefully, 
we can ignore column operations and choose row operations so that the initial monomials
of the $n$ rows become distinct. For the lack of imagination needed \emph{to explain} how to do so, 
we proceed \emph{to prove} the claim formally.

Suppose $X_{i_1}\wedge \cdots \wedge X_{i_n}$ is the initial Pl\"ucker coordinate of $\nabla F$
with respect to $\lambda$, where $1\leq i_1<\cdots <i_n \leq N$. Note that by Lemma \ref{L:sub}, 
the initial Pl\"ucker coordinate of $\nabla F$ remains constant
under the change of coordinates \eqref{E:sub}.

For $r\leq n$, 
we are going to prove
that there exist indices $\{j_1,\dots, j_r\}\subset \{1,\dots, n\}$ and 
an upper-triangular change of coordinates \eqref{E:sub} such that:
\begin{enumerate}
\item
$\init_{\lambda} (F_{j_s})=X_{i_s}$ for all $s=1,\dots, r$.

\item 
$
\init_{\lambda} (F_{j})\notin \{X_{i_1}, \dots, X_{i_r}\}
$
for all $j \in \{1,\dots, n\} \setminus \{j_1,\dots, j_r\}$;
\end{enumerate}

The base case of $r=0$ is vacuous. Suppose the claim has been established for some $r$. 
Then the smallest (with respect to $<_{\lambda}$) initial monomial of 
\[
\lspan \langle F_j \mid j \in \{1,\dots, n\} \setminus \{j_1,\dots, j_r\}\rangle
\]
is $X_{i_{r+1}}$. In particular, there exists \emph{the smallest index} $j_{r+1} \in \{1,\dots, n\} \setminus \{j_1,\dots, j_r\}$
such that 
\[
\init_{\lambda} (F_{j_{r+1}})=X_{i_{r+1}}.
\] 
Without loss of generality, we can assume that $X_{i_{r+1}}$ occurs in $F_{j_{r+1}}$ with coefficient $1$.
For every $j > j_{r+1}$, let $c_{j_{r+1} j}$ be the coefficient of $X_{i_{r+1}}$ in $F_{j}$.

Consider the change of variables 
\begin{equation*}
\begin{aligned}
x_j& \mapsto x_j \text{ for all $j\neq j_{r+1}$}, \\
x_{j_{r+1}} & \mapsto x_{j_{r+1}}-\sum_{j>j_{r+1}} c_{j_{r+1} j} \ x_j \ . 
\end{aligned}
\end{equation*}

Set 
\begin{align*}
G&:=F(x_1,\dots, \ x_{j_{r+1}}-\sum_{j>j_{r+1}} c_{j_{r+1} j} \ x_j\ , \ \ \dots, \ x_n), \quad \text{and} \\
\widetilde{F}_i&:=F_i(x_1,\dots, \ x_{j_{r+1}}-\sum_{j>j_{r+1}} c_{j_{r+1} j} \ x_j\ , \ \ \dots, \ x_n),
\text{ for $i=1,\dots,n$}.
\end{align*}
Note using \eqref{induced} that $\init_{\lambda}(\widetilde{F}_i)=\init_{\lambda}(F_i)$ for all $i=1,\dots,n$.
We compute
\[
\frac{\partial G}{\partial x_j}=\widetilde F_j, \text{ for all $j\leq j_{r+1}$}
\]
and 
\[
\frac{\partial G}{\partial x_j}=\widetilde F_j -c_{j_{r+1} j} \ \widetilde F_{j_{r+1}},
\text{ for all $j> j_{r+1}$.}
\]

Note that the initial monomial of 
$\partial G/\partial x_{j_s}$ is still $X_{i_s}$, for all $s=1,\dots, r$, because
\[
X_{i_s} <_{\lambda} X_{i_{r+1}}=\init_{\lambda}(\widetilde F_{j_{r+1}}).
\] 
Clearly, we still have \[
\init_{\lambda} \left(\partial G/\partial x_{j_{r+1}}\right)=X_{i_{r+1}}.
\] 

The coefficient of $X_{i_{r+1}}$ in $\partial G/\partial x_j$ remains $0$ for all $j< j_{r+1}$ such that $j\notin \{j_1,\dots, j_{r}\}$,
and, by the choice of the scalars $c_{j_{r+1} j}$, the coefficient of $X_{i_{r+1}}$ in $\partial G/\partial x_j$ becomes $0$ for all 
$j>j_{r+1}$ such that $j\notin \{j_1,\dots, j_{r}\}$. 
This means that $\init_{\lambda} (\partial G/\partial x_j)\notin \{X_{i_1}, \dots, X_{i_{r+1}}\}
$
for all $j \in \{1,\dots, n\} \setminus \{j_1,\dots, j_{r+1}\}$.
The induction step follows. 
\end{proof}

Applying Lemma \ref{key-1}, we continue with the proof of Theorem \ref{T:main-restated}
under the assumption that $\init_{\lambda}(F_i)$ are all distinct. In this case, 
the initial Pl\"ucker coordinate of $\nabla F$ is precisely
\[
\init_{\lambda}(F_1)\wedge \cdots \wedge \init_{\lambda}(F_n).
\]

Since, by assumption, $\lambda$ destabilizes $\nabla F$
in $\Grass\bigl(n, \Sym^{d} V\bigr)$, we have by Lemma \ref{HM-grass} that
\[
\sum_{i=1}^{n} w_\lambda\bigl(\init_{\lambda} (F_i)\bigr)>0.
\]
We
can choose rational numbers $\mu'_1,\dots,\mu'_n$ such that $\sum_{i=1}^n \mu'_i=0$
and, for all $i$, we have
\[
w_\lambda\bigl(\init_{\lambda} (F_i)\bigr) >\mu'_i.
\]
Equivalently, the $\lambda$-weight of every monomial in $\partial F/\partial x_i$ is greater than $\mu'_i$.

It follows that for every monomial $x_1^{d_1}\cdots x_n^{d_n}$ appearing with a non-zero coefficient
in $F$, and for every $i$,
we either have $d_i=0$ or 
\[
\lambda_1 d_1+\cdots+\lambda_i (d_i-1)+\cdots +\lambda_n d_n>\mu'_i.\]

Hence, for every $i$, either $d_i=0$ or 
\[
\lambda_1 d_1+\cdots+\lambda_id_i +\cdots + \lambda_n d_n>\mu'_i+\lambda_i.
\]
Set $\mu_i:=\mu'_i+\lambda_i$. Notice that $\sum_{i=1}^n \mu_i=0$.

The following key lemma now implies that the $T$-state of $F$, with respect to the 
torus $T$ in $\SL(n)$ acting diagonally on $\{x_1,\dots,x_n\}$, lies to one side of a hyperplane 
passing through the barycenter and hence is $T$-unstable. This finishes the proof of the 
first part of Theorem \ref{T:main} (namely, the fact that $\nabla F$ is semistable if and only if $F$ is).

\begin{lemma}\label{key-2}
Suppose $L(z_1,\dots,z_n)$ is a $\QQ$-linear function that vanishes at the barycenter of 
the $n$-simplex 
\[
\Delta_n:=\{(z_1,\dots,z_n)\mid \sum_{i=1}^n z_i=d+1, \ z_i\geq 0\}.
\] Suppose 
$\mu_1, \dots, \mu_n$ are rational numbers such that $\sum_{i=1}^n \mu_i=0$. 
Let 
\[
S_i=\{(z_1,\dots, z_n)\in \Delta_n \mid L(z_1,\dots,z_n)> \mu_i \quad \text{or} \quad z_i=0\}.
\]
Then there exists a $\QQ$-linear function that vanishes at the barycenter and 
that assumes positive values at all points of $S_1\cap \cdots \cap S_n$.
\end{lemma}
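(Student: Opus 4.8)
The plan is to read the statement as a pure convex-geometry fact about the simplex $\Delta_n$ and produce the desired functional by a separation argument. Write $b=\bigl(\tfrac{d+1}{n},\dots,\tfrac{d+1}{n}\bigr)$ for the barycenter and $\operatorname{supp}(z)=\{i:z_i>0\}$, and for a nonempty $T\subseteq\{1,\dots,n\}$ abbreviate $\mu_T:=\max_{i\in T}\mu_i$. The first step is the elementary reformulation that a point $z\in\Delta_n$ lies in $S_1\cap\cdots\cap S_n$ exactly when $L(z)>\mu_{\operatorname{supp}(z)}$: the condition ``$L(z)>\mu_i$ or $z_i=0$'' is automatic for $i\notin\operatorname{supp}(z)$ and says $L(z)>\mu_i$ for $i\in\operatorname{supp}(z)$. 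Thus $S_1\cap\cdots\cap S_n=\bigcup_T U_T$, a union over nonempty faces of the relatively open convex sets $U_T:=\{z:\operatorname{supp}(z)=T,\ L(z)>\mu_T\}$, and the goal becomes: find a $\QQ$-linear functional $M$ with $M(b)=0$ and $M>0$ on every $U_T$.

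The \emph{heart} of the proof is a two-sided estimate separating $b$ from $\bigcup_T U_T$. For any $z\in\Delta_n$ supported on $T$ one has $\sum_i\mu_i z_i=\sum_{i\in T}\mu_i z_i\le\mu_T\sum_{i\in T}z_i=(d+1)\mu_T$; since $\sum_i\mu_i b_i=\tfrac{d+1}{n}\sum_i\mu_i=0$, writing $b=\sum_k t_k z^{(k)}$ as a convex combination of points $z^{(k)}$ supported on $T_k$ forces $\sum_k t_k\mu_{T_k}\ge 0$. On the other hand $L$ is affine and vanishes at $b$, so if each $z^{(k)}\in U_{T_k}$ (whence $L(z^{(k)})>\mu_{T_k}$ strictly) then $0=L(b)=\sum_k t_k L(z^{(k)})>\sum_k t_k\mu_{T_k}\ge 0$, a contradiction. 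Hence $b\notin\operatorname{conv}\bigl(\bigcup_T U_T\bigr)$.

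The step I expect to be the main obstacle is converting this into \emph{strict} positivity on the non-closed set $\bigcup_T U_T$. Because the defining inequalities are strict, $b$ may lie on the boundary of the compact rational polytope $C:=\operatorname{conv}\bigl(\bigcup_T\overline{U_T}\bigr)$ (e.g. when all $\mu_i=0$, where $b\in\overline{\{L>0\}}$), so a crude supporting hyperplane need not be positive on $\bigcup_T U_T$. To get around this I will upgrade the estimate above to show that the minimal face $G$ of $C$ containing $b$ is disjoint from $\bigcup_T U_T$. Indeed, if some $p\in U_T$ lay in $G$, then, as $b\in\operatorname{relint}(G)$, I could extend the segment from $p$ past $b$ to write $b=(1-s)p+s\,r$ with $s\in(0,1)$ and $r\in G\subseteq C$; expanding $r$ as a convex combination of points of the various $\overline{U_\bullet}$ (each satisfying $L\ge\mu_{\operatorname{supp}}$) and rerunning the two estimates above, now with the unique strict contribution coming from the term $p$, again yields a contradiction.

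Finally I would take $M$ to be a $\QQ$-linear functional in the relative interior of the normal cone of $C$ at $b$; rationality is available because $C$ is a rational polytope and $b$ is rational. Such $M$ satisfies $M(b)=0$ and $M\ge 0$ on $C$, with contact set $\{z\in C:M(z)=0\}=G$. Since each $U_T\subseteq C$ and $U_T\cap G=\varnothing$, we conclude $M>0$ on every $U_T$, i.e. on all of $S_1\cap\cdots\cap S_n$, as required. (If $S_1\cap\cdots\cap S_n=\varnothing$ the statement is vacuous, and if $b\notin C$ any strictly separating $M$ works and $G=\varnothing$.) Thus the only genuinely delicate point is the face-disjointness of the previous paragraph, which is precisely what promotes the non-strict separation to the strict positivity demanded by the lemma.
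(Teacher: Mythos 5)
Your proof is correct, but it takes a genuinely different and much heavier route than the paper's, which is a three-line direct construction: for $z\in S_i$ one has $z_iL(z)\geq \mu_i z_i$ (strictly when $z_i>0$), and summing over $i$ and using $\sum_{i=1}^n z_i=d+1$ gives $(d+1)L(z)>\sum_{i=1}^n\mu_i z_i$ on all of $S_1\cap\cdots\cap S_n$, so $M(z):=(d+1)L(z)-\sum_{i=1}^n\mu_i z_i$ is the required functional, explicitly (it vanishes at the barycenter because $L$ does and $\sum_{i=1}^n\mu_i=0$). Amusingly, your own two estimates already contain this $M$: applied to a single point $z$ with support $T$ rather than to a convex combination representing the barycenter $b$, they read $L(z)>\mu_T$ and $\sum_i\mu_i z_i\leq(d+1)\mu_T$, and their combination is precisely $M(z)>0$; because you deployed them only to get a contradiction at $b$, you were forced into the genuinely delicate minimal-face argument (showing $G\cap\bigcup_T U_T=\varnothing$ before choosing $M$ in the relative interior of the normal cone), which you execute correctly --- this is exactly the right way to promote non-strict separation to strict positivity on a non-closed union, and your verification that each nonempty $\overline{U_T}$ is a rational polytope, hence that rational $M$ exists, is sound. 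One small gloss to repair: a functional in the normal cone at $b$ need not literally vanish at $b$; since everything lives in the hyperplane $\sum_i z_i=d+1$, you should replace your $M_0$ by $M_0-\frac{M_0(b)}{d+1}\sum_i z_i$, which is still $\QQ$-linear, vanishes at $b$, and agrees with $M_0$ up to a constant on $\Delta_n$, so the contact face is unchanged. As for what each approach buys: yours is a reusable polyhedral-separation template that needs no inspired guess of the functional; the paper's buys brevity, an explicit $M$, and --- relevantly for the rest of Section \ref{S:proof} --- the ability to rerun the argument with non-strict inequalities in the strictly semistable case, where the explicit form $M(z)=\sum_i(d\lambda_i-\mu_i')z_i$ is set identically to zero to deduce $\mu_i'=d\lambda_i$, a step your non-constructive $M$ would not directly support.
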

\begin{proof}[Proof of Lemma \ref{key-2}]
We have that 
\[
z_iL(z_1,\dots, z_n)\geq \mu_i z_i
\]
for every $(z_1,\dots,z_n)\in S_i$. 
Moreover, the inequality is strict if $z_i > 0$.
It follows that
\[
(z_1+\cdots+z_n)L(z_1,\dots,z_n)>\sum_{i=1}^n \mu_iz_i,
\]
or 
\[
(d+1)L(z_1,\dots,z_n)>\sum_{i=1}^n \mu_iz_i,
\]
for all $(z_1,\dots, z_n)\in S_1\cap \cdots \cap S_n$.

Clearly, \[
M(z_1,\dots,z_n):=(d+1)L(z_1,\dots,z_n)-\sum_{i=1}^n \mu_iz_i
\]
is the requisite linear functional. 
\end{proof}

Finally, suppose $F$ is stable but $\nabla F$ is strictly semistable with respect to some
$1$-PS $\lambda$. The argument above in the case when $\lambda$ is a destabilizing $1$-PS
of $\nabla F$
goes through after all strict inequalities are replaced by non-strict inequalities. 
In particular, after applying Lemma \ref{key-1}, we can use Lemma \ref{key-2} to conclude that  
the state of $F$ lies in the non-negative half-space with respect
to the linear function
\[
M(z_1,\dots,z_n)=(d+1)L(z_1,\dots,z_n)-\sum_{i=1}^n \mu_iz_i=\sum_{i=1}^n \bigl(d\lambda_i -\mu'_i\bigr) z_i.
\]
Since $F$ is stable, we must have $M\equiv 0$, or, equivalently, $\mu'_i=d\lambda_i$ for every $i=1,\dots, n$.
Hence,
\[
w_{\lambda}\bigl(\init_{\lambda} \left(\partial F/\partial x_i\right)\bigr) 
= d\lambda_i, \ \text{for all $i=1,\dots, n$.} 
\]
Suppose $r$ is the smallest index such that $\lambda_{r+1}=\cdots=\lambda_n$.
We claim that $F(x_1,\dots,x_n)=G(x_1,\dots, x_{r})+H(x_{r+1},\dots,x_n)$. Suppose not.
Then for some $s\leq r$ and $t\geq r+1$, there 
exists a monomial of degree $d+1$ that is divisible by $x_sx_t$ and  
that occurs with a non-zero coefficient in $F$. Then $\partial F/\partial x_t$ has a 
monomial divisible by $x_s$. In particular, 
\[
d\lambda_{t}=w_{\lambda}\bigl(\init_{\lambda} \left(\partial F/\partial x_t\right)\bigr) 
\leq \lambda_s+(d-1)\lambda_n<d\lambda_n.
\]
A contradiction!

Suppose $F$ is stable and $\nabla(F)$ is strictly semistable.
It remains to prove that $\nabla(F)$ has a closed $\SL(V)$-orbit.
By what has already been proven, we can choose a decomposition $V=W_1\oplus \cdots \oplus W_r$, where $r\geq 2$, 
such that $F=G_1+\cdots+G_r$, where $G_i\in \Sym^{d+1} W_i$ for $i=1,\dots, r$, and such that
$G_i$'s are not non-trivial sums of two polynomials in disjoint sets of variables. Note that $G_i$ is stable with respect to 
the $\SL(W_i)$ action for each $i=1,\dots, r$,
because $F$ is stable with respect to $\SL(V)$. 

Note that
\[
\nabla F=\nabla G_1\oplus \cdots \oplus \nabla G_r \subset \Sym^{d} W_1\oplus\cdots \oplus \Sym^{d}W_r \subset \Sym^{d} V
\]
is stabilized by every $1$-PS such that $W_i$'s are its eigenspaces. Choose a $1$-PS $\lambda$ such that
$W_i$'s are distinct eigenspaces of $\lambda$. Then the centralizer of $\lambda$ in $\SL(V)$ is 
\[
C_{\SL(V)}(\lambda)=\bigl(\GL(W_1)\times \cdots \times \GL(W_r)\bigr) \cap \SL(V).
\]
It follows by Luna's results \cite[Corollaire 2 and Remarque 1]{luna-adherences}, that the $\SL(V)$-orbit of $\nabla F$ is closed if and only 
if the $C_{\SL(V)}(\lambda)$-orbit of $\nabla F$ is closed. 

Set 
$n_i=\dim W_i$. Suppose $\mu$ is a $1$-PS of $C_{\SL(V)}(\lambda)$
acting on some basis of $W_i$ with weights $\{\mu_j^{i}\}_{j=1}^{n_i}$, for each $i=1,\dots, r$.
Let $\widetilde{\mu}$ be the $1$-PS of $C_{\SL(V)}(\lambda)$
acting on the same basis of $W_i$ with weights $\{\widetilde{\mu}_j^{i}\}_{j=1}^{n_i}$, where
\[
\widetilde{\mu}_j^{i}=\mu_j^{i}-\frac{1}{n_i} \sum_{j=1}^{n_i} \mu_j^{i}.
\]
The renormalized $\widetilde{\mu}$ is a $1$-PS of $\SL(W_1)\times \cdots \times \SL(W_r)$.
Notice that the actions of both $\mu$ and $\widetilde{\mu}$ on $\nabla F$ 
are identical because each Pl\"ucker coordinate of $\nabla F$ contains exactly $n_i$ vectors
from $\Sym^{d} W_i$. 
The orbit of $\nabla F$ under $\widetilde{\mu}$ is closed because we have already established
that the orbit of $\nabla G_i$ is closed under the $\SL(W_i)$ action. 
We conclude that the orbit of $\nabla F$ under $\mu$ is closed as well. This finishes the proof of Theorem \ref{T:main-restated}.

\section{Semistability of the associated form} 
\label{S:proof-2}
We keep notation of Subsections \ref{S:GIT} and \ref{S:complete}, but recall the necessary definitions for the reader's convenience:
As before, $V$ is a $\CC$-vector space of dimension $n$ and $S=\Sym V$ is the algebra of polynomials
on $V^{\vee}$.  If $W=\lspan \langle g_1,\dots, g_n\rangle$ is generated by a regular sequence of $n$ elements
in $\Sym^{d} V$, then the ideal $I_W=(g_1,\dots,g_n)$ defines a graded local Artinian Gorenstein $\CC$-algebra
\[
S_W=S/I_W \simeq \CC[x_1,\dots,x_n]/(g_1,\dots,g_n).
\]
The socle degree of $S_W$ is $\nu=n(d-1)$. Regarding the degree $\nu$ graded piece of $S_W$ as an element
of \[
\bP\left(\bigl(\Sym^{\nu} V\bigr)^{\vee}\right)\simeq \bP \Sym^{\nu} V^{\vee},
\] we obtain \textbf{the associated form} of $(g_1,\dots,g_n)$ as the corresponding element 
\[
\mathbf A (g_1,\dots,g_n) \in \bP\bigl(\Sym^{\nu} V^{\vee}\bigr)= \bP\bigl(\CC[\partial/\partial x_1, \dots, \partial/\partial x_n]_{\nu}\bigr),\]
where $\CC[\partial/\partial x_1, \dots, \partial/\partial x_n]_{\nu}$ is identified with $\Hom(\CC[x_1,\dots,x_n]_{\nu}, \CC)$
via the polar pairing. 

In this section, we answer in affirmative the semistability part of \cite[Question 3.1]{alper-isaev-binary} 
by proving:
\begin{theorem}[Theorem \ref{T:main-2}]
\label{T:associated}
Suppose that $\{g_1,\dots,g_n\}$ is a regular sequence of elements in $\Sym^d V$. Then 
the associated form $\mathbf A (g_1,\dots,g_n) \in \bP\bigl(\Sym^{n(d-1)} V^{\vee}\bigr)$ is semistable with respect to the $\SL(V)$-action.
In particular, for every smooth form $F\in \Sym^{d+1} V$, 
the associated form
\[
\mathbf A(F)=\mathbf A \bigl(\partial F/\partial x_1, \dots, \partial F/\partial x_n\bigr)\]
is semistable.
\end{theorem}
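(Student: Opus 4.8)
The plan is to run the Hilbert--Mumford numerical criterion for the point $\mathbf A(W)$ directly, in the spirit of \S\ref{S:grassmannian}. Fix a $1$-PS $\lambda$ acting diagonally on a basis $x_1,\dots,x_n$ of $V$ with weights $\lambda_1\le\cdots\le\lambda_n$, $\sum_i\lambda_i=0$, and let $w_\lambda$ be the induced weight on monomials. Setting $\nu=n(d-1)$, the dual monomial $\partial^{\alpha}=\partial_1^{a_1}\cdots\partial_n^{a_n}$, $|\alpha|=\nu$, has $\lambda$-weight $-w_\lambda(x^{\alpha})$, and by the apolarity description of the associated form the coefficient of $\partial^{\alpha}$ in $\mathbf A(W)$ is non-zero precisely when $x^{\alpha}\notin (I_W)_{\nu}$, i.e.\ when $x^{\alpha}$ survives in the one-dimensional socle $(S_W)_{\nu}$. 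Thus $\init_\lambda\bigl(\mathbf A(W)\bigr)=\partial^{\gamma}$, where $x^{\gamma}$ is the $<_\lambda$-largest surviving monomial, and the numerical criterion reduces the theorem to the single inequality
\[
w_\lambda(x^{\gamma})=\max\bigl\{\,w_\lambda(x^{\alpha}) : |\alpha|=\nu,\ x^{\alpha}\neq 0 \text{ in } S_W\,\bigr\}\ \geq\ 0 .
\]
Equivalently, since $(x_1\cdots x_n)^{d-1}$ has exponent $\delta=(d-1,\dots,d-1)$ with $w_\lambda(\delta)=0$ for every $\lambda$, the point $\delta$ must lie in the convex hull of the exponents of the surviving degree-$\nu$ monomials; so it is enough to produce, for each $\lambda$, a single surviving monomial of non-negative weight.

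To produce such a monomial I would use that $S_W$ is a complete intersection, so its socle is generated by the Jacobian $J:=\det\bigl(\partial g_i/\partial x_j\bigr)\in (S_W)_{\nu}$; in particular $\mathbf A(W)(J)\neq 0$, and a monomial survives exactly when $\mathbf A(W)$ is non-zero on it. Write each $g_i=\check g_i+\dots+\hat g_i$ as a sum of its $\lambda$-weight-homogeneous parts, from the lowest weight $\check W_i$ to the highest weight $\hat W_i$; then the top-weight part of $J$ is $\hat J=\det(\partial\hat g_i/\partial x_j)$, homogeneous of weight $\sum_i\hat W_i$. The numerical fact I would extract from the regular-sequence hypothesis is
\[
\sum_i \check W_i\ \leq\ 0\ \leq\ \sum_i \hat W_i .
\]
Indeed, $\Res$ is $\SL(V)$-invariant, so $\Res\bigl(g_1(\lambda(t)x),\dots,g_n(\lambda(t)x)\bigr)$ is independent of $t$; rescaling each argument and letting $t\to\infty$, continuity of the resultant identifies $\lim_{t\to\infty} t^{-d^{n-1}\sum_i\hat W_i}\Res(g_i)$ with the finite number $\Res(\hat g_1,\dots,\hat g_n)$, and since $\Res(g_i)\neq 0$ this forces $\sum_i\hat W_i\geq 0$ (the bound for $\check W_i$ is dual, using $t\to 0$). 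Consequently, in the favorable case where $\hat g_1,\dots,\hat g_n$ is again a regular sequence and $\hat J$ survives in $S_W$, every monomial of $\hat J$ is a surviving monomial of weight $\sum_i\hat W_i\geq 0$, and the required inequality holds.

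The main obstacle is the degenerate case where the leading system $\hat g_1,\dots,\hat g_n$ fails to be a regular sequence, so that $\hat J$ vanishes or its class dies in $S_W$ and the socle-degree bookkeeping collapses. Here I would argue by contradiction: if every surviving degree-$\nu$ monomial had strictly negative weight, then $\mathbf A(W)$ would be supported entirely in negative $x$-weights; taking the lowest-weight homogeneous part of the apolarity relations $g_i\lrcorner \mathbf A(W)=0$ would yield a non-zero form $A^{\mathrm{bot}}$ of pure weight $w_*<0$ annihilated by all the top-weight parts $\hat g_i$. When $\hat g_1,\dots,\hat g_n$ is a regular sequence this is immediately absurd, since then $A^{\mathrm{bot}}$ would be apolar to the socle of $\CC[x_1,\dots,x_n]/(\hat g_i)$, forcing $w_*=\sum_i\hat W_i\geq 0$. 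Removing the regularity assumption on the leading system is the crux, and I expect it to require replacing $\lambda$ by an optimal destabilizing $1$-PS (Kempf) and analysing the initial algebra $\CC[x_1,\dots,x_n]/\init_{w_\lambda}(I_W)$ — a flat Gorenstein-to-Artinian degeneration of $S_W$ — directly, tracking how the socle class of $S_W$ specializes and using the fact that $g_1,\dots,g_n$ genuinely has no common zero to bound its weight from below.
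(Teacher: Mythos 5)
Your reduction is sound and in fact coincides with the paper's: producing, for each $1$-PS $\lambda$, a single surviving degree-$n(d-1)$ monomial of non-negative $\lambda$-weight is exactly the content of the paper's Proposition \ref{P:assoc}, which shows that the unique monomial $m_0$ missing from the initial monomials of $(I_W)_{n(d-1)}$ satisfies $m_0 \geq_{\lambda} x_1^{d-1}\cdots x_n^{d-1}$, hence $w_\lambda(m_0)\geq 0$. Your resultant computation giving $\sum_i \hat W_i \geq 0$ is also correct. But the proof does not close, and the gap is worse than your final paragraph suggests: the ``favorable case'' is essentially vacuous as leverage. By your own limit formula, $\Res(\hat g_1,\dots,\hat g_n)=\lim_{t\to\infty} t^{-d^{n-1}\sum_i\hat W_i}\Res(g_1,\dots,g_n)$, so if the leading system is a regular sequence then $\Res(\hat g)\neq 0$ forces $\sum_i\hat W_i=0$; conversely, whenever $\sum_i\hat W_i>0$ the leading system \emph{automatically} fails to be regular. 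Thus your method succeeds only in the boundary case and breaks down in precisely the situation it was designed to handle. The proposed remedy --- Kempf's optimal $1$-PS plus the flat degeneration to $S/\init_{\lambda}(I_W)$ --- restates the problem rather than solving it: by Lemma \ref{HM-grass}, semistability of the Hilbert point \emph{is} a statement about which monomial is missing from the initial ideal in degree $n(d-1)$, so ``analysing the initial algebra'' is the question, not an answer. (Two smaller, fixable slips: ``every monomial of $\hat J$ is a surviving monomial'' should be ``at least one monomial,'' and even $\hat J\notin (I_W)_{n(d-1)}$ requires identifying $\init_{\lambda}(I_W)$ with $(\hat g_1,\dots,\hat g_n)$ via flatness; and the extremal graded piece of the apolarity relations pairs $A^{\mathrm{bot}}$ with the \emph{opposite}-extremal parts $\check g_i$, not with $\hat g_i$.)

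The paper's actual mechanism, which nothing in your sketch anticipates, avoids isobaric decompositions of the $g_i$ altogether. Assuming for contradiction that $m_0 <_{\lambda} x_1^{d-1}\cdots x_n^{d-1}$, it takes any surviving monomial $x_1^{d_1}\cdots x_n^{d_n}$, notes that some partial sum satisfies $d_1+\cdots+d_i>i(d-1)$, and restricts the system to the coordinate subspace $x_{i+1}=\cdots=x_n=0$. The restricted system $h_k=g_k(x_1,\dots,x_i,0,\dots,0)$ is still base-point-free (the $g_k$ have no common zero anywhere), so a Bertini-type lemma (Lemma \ref{L:bertini}) extracts from its span a regular sequence of length $i$ in $i$ variables, whose Artinian Gorenstein quotient has socle degree $i(d-1)$; hence $x_1^{d_1}\cdots x_i^{d_i}$ lies in the ideal modulo $(x_{i+1},\dots,x_n)$, producing a new surviving monomial with strictly larger trailing exponents, and infinite descent concludes. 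The key trade --- replacing the intractable leading-term degeneration (where regularity is lost) by restriction to coordinate subspaces (where regularity can always be recovered by Bertini) --- is the missing idea, and your proposal supplies no substitute for it.
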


In the case of binary forms (i.e., $n=2$), the above result was established by Alper and Isaev, cf.
\cite[Proposition 4.1]{alper-isaev-binary}. Moreover, Alper and Isaev proved that the associated form $\mathbf A(F)$
is smooth (hence stable) for a \emph{generic} $F\in \Sym^{d+1} V$, with $d\geq 2$ \cite[Proposition 4.3]{alper-isaev-assoc}.

Theorem \ref{T:associated} follows from the following more technically stated result:
\begin{prop}\label{P:assoc} Suppose that $\{g_1,\dots,g_n\}$ is a regular sequence of elements in $\Sym^d V$ 
generating the ideal $I=(g_1,\dots,g_n)$ in $S$.
Suppose $\lambda$ is a $1$-PS of $\SL(V)$ acting with weights \[\lambda_1 \leq \cdots \leq \lambda_n\] on a basis $\{x_1,\dots, x_n\}$ of $V$. 
Consider the resulting monomial order $<_{\lambda}$ on the monomials in the variables $x_1,\dots,x_n$,
as described in Subsection \ref{S:grassmannian}. Then the set of the initial monomials of $I_{n(d-1)}$ contains all monomials of degree $n(d-1)$ with the
exception of a single monomial $m_0$ such that 
\[
m_0 \geq_{\lambda} x_1^{d-1}\cdots x_n^{d-1}.
\]
\end{prop}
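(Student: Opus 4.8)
The plan is to prove the two assertions separately: (i) that exactly one degree-$\nu$ monomial (write $\nu=n(d-1)$) fails to be an initial monomial of $I_\nu$, and (ii) that this exceptional monomial $m_0$ satisfies $m_0\ge_\lambda x_1^{d-1}\cdots x_n^{d-1}$. For (i) I would pass to the initial ideal $\init_\lambda(I)$ for the term order $<_\lambda$. The set of initial monomials of $I_\nu$ is exactly the set of degree-$\nu$ monomials lying in $\init_\lambda(I)$ (any such monomial is $\init_\lambda(f)$ for some $f\in I$, and by homogeneity its degree-$\nu$ component lies in $I_\nu$ with the same initial monomial), and the complementary \emph{standard} monomials form a basis of $(S/\init_\lambda(I))_\nu$. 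Since passing to an initial ideal preserves Hilbert functions, $\dim(S/\init_\lambda(I))_\nu=\dim(S/I)_\nu=1$ by the socle computation of Subsection \ref{S:complete}. Hence there is exactly one standard degree-$\nu$ monomial $m_0$, and every other degree-$\nu$ monomial is an initial monomial of $I_\nu$.

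For (ii) the first step is a normal-form monotonicity observation: because $\init_\lambda$ selects the $<_\lambda$-\emph{smallest} monomial, reducing any polynomial modulo a Gr\"obner basis of $I$ only replaces monomials by strictly $<_\lambda$-larger ones. Thus the normal form of a degree-$\nu$ element has all its monomials $\ge_\lambda$ the element's smallest monomial, and $m_0$ is exactly the $<_\lambda$-largest degree-$\nu$ monomial with nonzero image in $S/I$; equivalently, every degree-$\nu$ monomial that is $>_\lambda m_0$ lies in $I$. Setting $\rho=x_1^{d-1}\cdots x_n^{d-1}$ (so $w_\lambda(\rho)=0$), the normal form of $\rho$ is $c\,m_0$ with all monomials $\ge_\lambda\rho$, so whenever $\rho\notin I$ we get $m_0\ge_\lambda\rho$ at once. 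It therefore remains to handle the case $\rho\in I$, where I must produce some degree-$\nu$ monomial $x^\beta\notin I$ with $x^\beta\ge_\lambda\rho$.

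At this point I would invoke the complete-intersection structure, which has not yet been used. As $S/I$ is Artinian Gorenstein, its one-dimensional socle in degree $\nu$ is generated by the image of the Jacobian determinant $\mathcal{J}=\det(\partial g_i/\partial x_j)$, so $\overline{\mathcal{J}}=c\,\overline{m_0}$ with $c\neq 0$; in particular some monomial of $\mathcal{J}$ survives in $S/I$ and is $\le_\lambda m_0$. To force a surviving monomial of nonnegative weight I would use that $\{g_1,\dots,g_n\}$ is a regular sequence, i.e. $[W]\in\Grass(n,\Sym^d V)_{\Res}$ is $\SL(V)$-semistable: applying Lemma \ref{HM-grass} to $-\lambda$ shows that the $\lambda$-weight of the $<_\lambda$-largest Pl\"ucker coordinate of $W$ is $\ge 0$. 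The plan is to degenerate $g_1,\dots,g_n$ to their $\lambda$-leading (maximal-weight) forms, to match the maximal-weight part of $\mathcal{J}$ with the Jacobian of the leading forms, whose weight equals this nonnegative Pl\"ucker weight, and, via the equivariance $\mathbf A(\lambda(t)\cdot W)=\lambda(t)\cdot\mathbf A(W)$, to identify $w_\lambda(m_0)$ with the $\lambda$-weight of the limiting homogeneous associated form, thereby obtaining $w_\lambda(m_0)\ge 0$. The lexicographic tie-break in the definition of $\ge_\lambda$ should then be dispatched by a small generic perturbation of $\lambda$ preserving its ordering.

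The main obstacle is exactly this last step in the case $\rho\in I$: the maximal-weight part of $\mathcal{J}$ can have \emph{vanishing} image in $S/I$ (as elementary $n=2$ examples show), and this is precisely what happens when the $\lambda$-leading forms of $g_1,\dots,g_n$ fail to be a regular sequence, so that the expected top term of $\mathcal{J}$ cancels and the flat limit of $W$ leaves $\Grass(n,\Sym^dV)_{\Res}$. Controlling this degeneration — equivalently, ruling out that the associated form has all of its monomials of strictly negative $\lambda$-weight — is where the regular-sequence hypothesis must enter through the non-vanishing and $\SL(V)$-invariance of the resultant, and not merely through the Hilbert function; I expect this to be the crux of the argument.
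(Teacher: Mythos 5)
Your first half is correct and, in fact, matches the paper's setup: the Gorenstein socle computation gives $\dim (S/I)_{\nu}=1$ for $\nu=n(d-1)$, so exactly one degree-$\nu$ monomial $m_0$ fails to be an initial monomial, and your Gr\"obner normal-form monotonicity (initial $=$ $<_{\lambda}$-smallest, so reduction only increases monomials) correctly shows that $m_0$ is the $<_{\lambda}$-largest monomial with nonzero image in $S/I$; hence it suffices to exhibit \emph{some} degree-$\nu$ monomial $\geq_{\lambda}\rho:=x_1^{d-1}\cdots x_n^{d-1}$ not lying in $I$. This is logically equivalent to the paper's contradiction hypothesis, under which every monomial $\geq_{\lambda}\rho$ belongs to $I_{\nu}$. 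But the crux --- the case $\rho\in I$ --- is exactly where your proposal stops, and you concede as much. The degeneration strategy (leading forms, Jacobian generating the socle, semistability of $[W]$ via Lemma \ref{HM-grass} applied to $-\lambda$, equivariance of $\mathbf A$) founders on the obstacle you yourself name: the flat limit of $W$ under $\lambda$ can leave $\Grass\bigl(n,\Sym^d V\bigr)_{\Res}$. For instance, with $n=2$, $g_1=x_1^d+x_2^d$, $g_2=x_1x_2^{d-1}$ and $\lambda=(-1,1)$, the pair is a regular sequence but the maximal-weight leading forms $x_2^d,\ x_1x_2^{d-1}$ share the zero $x_2=0$; the top term of the Jacobian then cancels in $S/I$ and the limiting-weight identification of $w_{\lambda}(m_0)$ breaks down. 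Merely knowing that $\Res$ is an $\SL(V)$-invariant divisor does not control this degeneration, and no repair is offered; so the proposal proves only the easy half of the proposition.

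The paper's mechanism for the hard case is entirely different and avoids degenerations. Given $m_1=x_1^{d_1}\cdots x_n^{d_n}\notin I_{\nu}$ with $m_1<_{\lambda}\rho$, the ordering $\lambda_1\leq\cdots\leq\lambda_n$ (an Abel-summation argument, with the lexicographic tie-break handled separately) yields an index $i\leq n-1$ with $d_1+\cdots+d_i>i(d-1)$. Regularity then enters through the Nullstellensatz rather than the Hilbert function: the restrictions $h_k=g_k(x_1,\dots,x_i,0,\dots,0)$ are base-point-free on $\bP W^{\vee}$ for $W=\lspan\langle x_1,\dots,x_i\rangle$, so the Bertini-type Lemma \ref{L:bertini} extracts a regular sequence $f_1,\dots,f_i\in(h_1,\dots,h_n)$ whose quotient has socle degree $i(d-1)$; hence $x_1^{d_1}\cdots x_i^{d_i}\in I \bmod (x_{i+1},\dots,x_n)$, and since $m_1\notin I_{\nu}$ there must exist another monomial $m_2\notin I_{\nu}$ whose exponents of $x_{i+1},\dots,x_n$ weakly dominate those of $m_1$, with at least one strict increase. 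Iterating produces an infinite strictly increasing sequence of bounded tail exponents --- absurd. So the regular-sequence hypothesis does enter beyond the Hilbert function, exactly as you predicted, but via restriction to coordinate subspaces and a combinatorial monovariant, sidestepping the non-flatness problem that sinks the $\lambda$-degeneration route.
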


Before we prove Proposition \ref{P:assoc}, we explain how Theorem \ref{T:associated} follows from it.
\begin{proof}[Proof of Theorem \ref{T:associated}]
Let $I=(g_1,\dots,g_n)$ be the ideal in $S=\Sym V$ generated by a regular sequence in $\Sym^d V$. 
Our goal is to prove that the $n(d-1)^{st}$ Hilbert point of $S/I$ is semistable. Equivalently, we need
to prove that $I_{n(d-1)} \subset \Sym^{n(d-1)}V$ is semistable as a point in $\Grass \bigl(\dim I_{n(d-1)}, \Sym^{n(d-1)}V\bigr)$.

Suppose $\lambda$ is a $1$-PS of $\SL(V)$ acting with weights \[\lambda_1 \leq \cdots \leq \lambda_n\] on a basis $\{x_1,\dots, x_n\}$ of $V$. 
By Proposition \ref{P:assoc}, the set of the initial monomials of $I_{n(d-1)}$ contains all monomials of degree $n(d-1)$ with the
exception of a single monomial $m_0$ such that $m_0 \geq_{\lambda} x_1^{d-1}\cdots x_n^{d-1}$.
Then the sum of the $\lambda$-weights of all initial monomials of $I_{n(d-1)}$
satisfies\footnote{Clearly, the sum of $\lambda$-weights of all degree $n(d-1)$ monomials is $0$.}
\[
\text{the sum} = 0-w_{\lambda}(m_0) \leq - w_{\lambda} (x_1^{d-1}\cdots x_n^{d-1})=0.
\]
It follows by Proposition \ref{HM-grass} that $I_{n(d-1)}$ is semistable with respect to $\lambda$.
\end{proof}

\begin{proof}[Proof of Proposition \ref{P:assoc}]

Since $S/I$ is a graded local Artinian Gorenstein $\CC$-algebra with socle in degree $n(d-1)$, we have that
$I_{n(d-1)}$ has codimension $1$ in $\Sym^{n(d-1)}V$. In particular, the set of the initial monomials of $I_{n(d-1)}$ with respect to $\lambda$
is the set of all degree $n(d-1)$ monomials with the exception of exactly one monomial $m_0$. 
It remains to prove that 
\[
m_0 \geq_{\lambda} x_1^{d-1}\cdots x_n^{d-1}.
\]

We argue by contradiction. Suppose $m_0 <_{\lambda} x_1^{d-1}\cdots x_n^{d-1}$. Then all monomials greater than or
equal to $x_1^{d-1}\cdots x_n^{d-1}$ with respect to $<_{\lambda}$ are the initial monomials of $I_{n(d-1)}$. 
It follows that every monomial greater than or equal to $x_1^{d-1}\cdots x_n^{d-1}$ with respect to $<_{\lambda}$
actually belongs to $I_{n(d-1)}$. 

Suppose now that $m_1=x_1^{d_1}\cdots x_n^{d_n}$ is \emph{some} monomial of degree $n(d-1)$ that \emph{does not} belong to $I_{n(d-1)}$
(for example, we can take $m_1=m_0$). Then $m_1 <_{\lambda} x_1^{d-1}\cdots x_n^{d-1}$.
\begin{claim} 
There must exist an index $i\in \{1,\dots, n-1\}$ such that 
\[
d_1+\cdots+d_i>i(d-1).
\]
\end{claim}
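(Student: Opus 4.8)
The plan is to deduce the Claim by contradiction, turning the inequality $m_1<_{\lambda}x_1^{d-1}\cdots x_n^{d-1}$ established just above into a purely combinatorial statement about the exponent vector $(d_1,\dots,d_n)$ and the weights $\lambda_1\le\cdots\le\lambda_n$. The data I would use are the constraints $\sum_{i=1}^n d_i=n(d-1)$, $d_i\ge 0$, and $\sum_{i=1}^n\lambda_i=0$. So I would assume the negation of the conclusion, namely that $d_1+\cdots+d_i\le i(d-1)$ for every $i\in\{1,\dots,n-1\}$, and then show that this forces $m_1\ge_{\lambda}x_1^{d-1}\cdots x_n^{d-1}$, contradicting the displayed inequality.

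The first step is to estimate the $\lambda$-weight $w_{\lambda}(m_1)=\sum_{i=1}^n d_i\lambda_i$ by summation by parts. Setting $S_i:=d_1+\cdots+d_i$ with $S_0=0$ and $S_n=n(d-1)$, Abel summation gives
\[
\sum_{i=1}^n d_i\lambda_i = S_n\lambda_n+\sum_{i=1}^{n-1}S_i(\lambda_i-\lambda_{i+1}).
\]
Since $\lambda_i-\lambda_{i+1}\le 0$ and, by the contradiction hypothesis, $S_i\le i(d-1)$ for $i\le n-1$, each summand obeys $S_i(\lambda_i-\lambda_{i+1})\ge i(d-1)(\lambda_i-\lambda_{i+1})$. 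Replacing every $S_i$ by $i(d-1)$ and running the same summation by parts in reverse telescopes the lower bound to $(d-1)\sum_{i=1}^n\lambda_i=0$. Hence $w_{\lambda}(m_1)\ge 0=w_{\lambda}(x_1^{d-1}\cdots x_n^{d-1})$. If this inequality is strict, then $m_1>_{\lambda}x_1^{d-1}\cdots x_n^{d-1}$ and we are done.

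The remaining, and in my view most delicate, case is equality of weights, where the lexicographic tie-break in the definition of $<_{\lambda}$ has to be handled. Here I would locate the first index $r+1$ at which $(d_1,\dots,d_n)$ differs from $(d-1,\dots,d-1)$; if no such index exists then $m_1=x_1^{d-1}\cdots x_n^{d-1}$ and we are again done. Otherwise $d_j=d-1$ for $j\le r$, so $S_{r+1}=r(d-1)+d_{r+1}$. One first checks that $r+1\le n-1$: if instead $r+1=n$, then $\sum_i d_i=n(d-1)$ would force $d_n=d-1$, i.e.\ $m_1=x_1^{d-1}\cdots x_n^{d-1}$. With $r+1\le n-1$ the contradiction hypothesis applies at $i=r+1$ and yields $d_{r+1}\le d-1$, hence $d_{r+1}<d-1$ since $d_{r+1}\ne d-1$. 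By the tie-break rule this means $m_1>_{\lambda}x_1^{d-1}\cdots x_n^{d-1}$. In every case $m_1\ge_{\lambda}x_1^{d-1}\cdots x_n^{d-1}$, the desired contradiction. The genuine subtlety lies exactly in this last paragraph: the weight estimate alone does not settle the equal-weight situation, and one must extract the strict inequality $d_{r+1}<d-1$ from the partial-sum bound at precisely the first differing coordinate, while verifying that this coordinate lies in the admissible range $\{1,\dots,n-1\}$; the Abel summation itself is routine.
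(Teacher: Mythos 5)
Your proof is correct and is essentially just the detailed unwinding of the paper's one-line argument ("combine $\lambda_1\le\cdots\le\lambda_n$ with $m_1<_\lambda x_1^{d-1}\cdots x_n^{d-1}$"): the Abel-summation bound on $w_\lambda(m_1)$ together with the tie-break analysis at the first differing exponent is exactly the intended combination, merely phrased contrapositively. In particular you correctly handle the paper's nonstandard tie-break convention (larger exponent at the first difference means \emph{smaller} in $<_\lambda$) and the boundary check $r+1\le n-1$, so nothing is missing.
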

\begin{proof}[Proof of claim]
This follows by combining the assumptions $\lambda_1\leq \cdots \leq \lambda_n$ and
$m_1 <_{\lambda} x_1^{d-1}\cdots x_n^{d-1}$. 
\end{proof}

Take $i\in \{1,\dots, n-1\}$ such that $d_1+\cdots+d_i>i(d-1)$. 
For every $1\leq k \leq n$, set \[
h_k(x_1,\dots,x_i):=g_k(x_1,\dots,x_i, 0,\dots, 0)=g_k \mod (x_{i+1},\dots, x_n).
\]

Next, we recall a well-known Bertini-type result: 
\begin{lemma}\label{L:bertini}
Suppose $L \subset \Sym^d W$ is a subspace with no base locus in $\bP W^{\vee}$ and 
$\dim L \geq \dim W$. If $\dim L > \dim W$, then
a general hyperplane in $L$ defines a base-point-free linear system as well. In particular, a general
$\bigl(\dim W \bigr)$-dimensional linear subspace of $L$ defines a base-point-free linear system,
hence is generated by a regular sequence of degree $d$ forms.
\end{lemma}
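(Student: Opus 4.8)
The plan is to reduce everything to the single hyperplane statement and then iterate. Write $m=\dim W$ and, for a subspace $L'\subseteq \Sym^d W$, let $B(L')\subset \bP W^{\vee}$ denote its base locus, i.e.\ the common zero locus in $\bP W^{\vee}$ of all forms in $L'$. The hypothesis is $B(L)=\emptyset$ and $\dim L\geq m$, and the main claim to establish is that if $\dim L>m$, then $B(H)=\emptyset$ for a general hyperplane $H\subset L$.

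First I would exploit base-point-freeness to produce a morphism. Since $B(L)=\emptyset$, for every $[v]\in\bP W^{\vee}$ the evaluation functional $\mathrm{ev}_v\co f\mapsto f(v)$ is a nonzero element of $L^{\vee}$; choosing a basis of $L$ exhibits the resulting assignment as a morphism
\[
\phi_L\co \bP W^{\vee}\to \bP(L^{\vee}),\qquad [v]\mapsto [\mathrm{ev}_v].
\]
Hyperplanes $H\subset L$ are parametrized by $\bP(L^{\vee})$ via $H=\ker\ell \leftrightarrow [\ell]$. The key observation is that $[v]\in B(H)$ precisely when $\mathrm{ev}_v$ vanishes on $H=\ker\ell$, that is, when $\mathrm{ev}_v$ is proportional to $\ell$; hence $B(H)=\phi_L^{-1}([\ell])$, and in particular $B(H)=\emptyset$ if and only if $[\ell]\notin \phi_L(\bP W^{\vee})$. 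The dimension count then finishes the hyperplane claim: the image $\phi_L(\bP W^{\vee})$ is a closed subvariety of dimension at most $\dim \bP W^{\vee}=m-1$, while $\dim\bP(L^{\vee})=\dim L-1>m-1$ when $\dim L>m$. Thus the image is a proper closed subset, its complement is a dense open subset of $\bP(L^{\vee})$, and a general hyperplane $H$ satisfies $B(H)=\emptyset$.

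For the ``in particular'', I would pass from hyperplanes to a general $m$-dimensional subspace by descending induction. The locus $U\subset \Grass(m,L)$ of base-point-free $m$-dimensional subspaces is open, so it suffices to show it is nonempty; cutting one general hyperplane at a time (legitimate at each step because the running dimension stays $>m\geq \dim W$ until it reaches $m$) produces a base-point-free subspace $L'\subset L$ with $\dim L'=m$. By irreducibility of $\Grass(m,L)$, the open set $U$ is then dense, so a general $m$-dimensional subspace $L'$ is base-point-free. Finally, $L'$ is spanned by $m=\dim W$ forms of degree $d$ whose common zero locus in $\bP W^{\vee}$ is empty, so by the equivalences recorded in Subsection~\ref{S:complete} (Krull's Hauptidealsatz together with Hilbert's Nullstellensatz) these forms constitute a regular sequence.

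The hard part is the identification $B(H)=\phi_L^{-1}([\ell])$ together with the bound $\dim\phi_L(\bP W^{\vee})\leq m-1$; once these are in place, the conclusion is a one-line dimension comparison. The only other point requiring care is the transition from ``general hyperplane'' to ``general $m$-dimensional subspace'', which I handle by the openness-plus-nonemptiness argument above rather than by a direct incidence-variety computation.
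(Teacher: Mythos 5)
Your proof is correct and is essentially the paper's own argument in dual form: your morphism $\phi_L$, under the canonical identification $\Grass(\dim L-1, L)\simeq \bP(L^{\vee})$, is exactly the family $p\mapsto \{D\in L \mid D(p)=0\}$ of fibers of the paper's incidence correspondence, and both proofs conclude with the same dimension count $\dim \bP W^{\vee}=m-1<\dim L-1=\dim \bP(L^{\vee})$. If anything, you are slightly more complete than the paper, which proves only the hyperplane statement and leaves the passage to a general $m$-dimensional subspace (your openness-plus-iteration step in $\Grass(m,L)$) implicit.
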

\begin{proof} Set $r=\dim L$ and $n=\dim W$. Assume $r>n$.
Consider the incidence correspondence \[I=\{(p, D) \mid p\in D, D\in L\}\subset \bP^{n-1}\times L.\] Since $L$ is base-point-free, 
the projection $I \to \bP^{n-1}$ is a $\CC^{r-1}$-bundle. As long as the hyperplane $H \subset L$ is not one of these
fibers, we are done. The claim follows by dimension count: $n-1=\dim \bP^{n-1} < r-1 =\dim \Grass(r-1, L)$.
\end{proof}

Set $W=\lspan \langle x_1,\dots,x_i\rangle$.  
Applying Lemma \ref{L:bertini} to the base-point-free linear system $\lspan \langle  h_1,\dots,h_n\rangle$ in $\Sym^d W$,
we find that there exists a regular sequence $\{f_1,\dots, f_i\}$ of elements in $\Sym^d W$ such that $(f_1,\dots, f_i)\subset (h_1,\dots,h_n)$.
Then $\left(\Sym W\right)/(f_1,\dots, f_i)$ is a graded local Artinian Gorenstein algebra with socle in degree $i(d-1)$. It follows that
every monomial in variables $x_1,\dots,x_i$ of degree greater than $i(d-1)$ lies in $(f_1,\dots, f_i)\subset (h_1,\dots,h_n)$.

Going back to the monomial $m_1$ and recalling the assumption \[D:=d_1+\cdots+d_i>i(d-1),\] we have
\[
x_1^{d_1}\cdots x_i^{d_i} \in I_D \mod{(x_{i+1}, \dots, x_n)}.
\]
Since $m_1=x_1^{d_1}\cdots x_i^{d_i} x_{i+1}^{d_{i+1}}\cdots x_n^{d_n} \notin I_{n(d-1)}$, there must be a monomial
in \[(x_{i+1}, \dots, x_n)(x_1,\dots,x_n)^{D-1}\] such that its product with $x_{i+1}^{d_{i+1}}\cdots x_n^{d_n}$ is not in $I_{n(d-1)}$.
We have arrived at the conclusion that there exists a monomial 
\[
m_2=x_1^{d'_1}\dots x_{i}^{d'_i} x_{i+1}^{d'_{i+1}}\cdots x_n^{d'_n}
\]
 that \emph{does not} belong to $I_{n(d-1)}$ and such that 
 \begin{equation}\label{E:exponents}
 d'_{i+1} \geq d_{i+1}, \ d'_{i+2} \geq d_{i+2},  \ \dots, \ d'_{n} \geq d_{n},
 \end{equation}
and such that at least one of the above inequalities is strict. 

Repeating this process, we obtain an infinite sequence of monomials
$m_1, m_2, \dots$ such that the exponents of $x_1,\dots, x_n$ for any two successive monomials
satisfy the inequalities \eqref{E:exponents} (possibly with a different $i$ each time), with at least one inequality strict. This is however absurd:
Clearly, $d_{n}$ has to stabilize, which forces $i$ to be less than $n$ from then on, which forces $d_{n-1}$ to stabilize, etc. 
\end{proof}

\subsection*{Acknowledgements.} I am grateful to Jarod Alper for introducing me to the problem 
of GIT stability of Hilbert points of Milnor algebras. 
I had many illuminating discussions on the subject with Jarod Alper and Alexander Isaev. 
I would also like to thank Alexander Isaev and Ian Morrison for comments on earlier drafts.  
This research was partially supported by the NSF grant DMS-1259226 and a Sloan Research Fellowship. 
The first part of the paper was completed at the Columbia University in June 2015. 
The second part was completed at the Max Planck Institute for Mathematics in Bonn in October 2015. 
I thank these institutions for their hospitality.

\bibliography{NSF_bib}{}
\bibliographystyle{alpha}
\end{document}